\documentclass[10pt]{amsart}
\usepackage[foot]{amsaddr}
\title{Arquímedes y las superficies cuádricas}
\author{Jonathan Taborda Hernández}
\email{taborda50@gmail.com}
%\author{Hernando Manuel Quintana Ávila}
%\address{Instituto Tecnológico Metropolitano, ITM}
%\date{Marzo de 2011}
\subjclass[2010]{00A30, 01A05, 58A05}
\usepackage{amsmath,amssymb,amsthm,graphicx,cancel,pifont,pb-diagram,mathpazo,epstopdf,wasysym,marvosym,mathrsfs,eso-pic,wallpaper,array,arydshln,multicol,etoolbox,morefloats,etex,mathtools,wrapfig}
\usepackage[activeacute,english,greek,spanish]{babel}
\usepackage[square,numbers]{natbib}
\usepackage[labelfont=bf]{caption}
\usepackage[font={small,it}]{caption}
\usepackage[belowskip=-10pt,aboveskip=6pt,labelsep=period]{caption}
\usepackage[LGR,T1]{fontenc}
\usepackage[10pt,GlyphNames,boldLipsian]{teubner}
\usepackage[titletoc,toc,page]{appendix}
%\usepackage[style=numeric]{biblatex}
%\addbibresource{Archi.bib}
%\usepackage[verbose]{backref}
%\backrefsetup{verbose=false}
\usepackage[scale=0.9]{ccicons}
%\ProvidesPackage{eqbackref}
%\usepackage{makeidx}
%\makeindex
\renewcommand{\qedsymbol}{$\blacksquare$}
\usepackage{tikz}

\usepackage{hyperref}
\hypersetup{bookmarksopen,bookmarksnumbered,colorlinks,linkcolor=blue,urlcolor=blue}
\setlength{\textheight}{235mm}
\setlength{\textwidth}{180mm}
\setlength{\oddsidemargin}{-1cm}
\setlength{\evensidemargin}{-1cm}
\setlength{\topmargin}{-10pt}

\newtheorem*{defini}{Definición}
\newtheorem{teor}{Teorema}

\newtheorem*{propo}{Proposición}

\begin{document}\maketitle
\begin{abstract}
Un breve repaso por la historia de las secciones cónicas no estaría completo sin un conteo exhaustivamente tolerable sobre todas las cosas relativas al tema que pueden ser halladas en la extensa obra del sabio Arquímedes. No se posee evidencia contundente de que el genio siracusano escribiera un tratado sobre las \textit{Cónicas} por separado.
\end{abstract}
\selectlanguage{english}
\begin{abstract}
A brief review of the history of the conic sections would not be complete without an exhaustively tolerable account of all the things related to the subject that can be found in the extensive work of the wise Archimedes. There is no strong evidence that the Syracusan genius wrote a treatise on \textit{Conics} separately.
\end{abstract}
\selectlanguage{spanish}
\tableofcontents
\section{Introducción}
Un breve repaso por la historia de las \textit{secciones cónicas} no estaría completo sin un conteo exhaustivamente tolerable sobre todas las cosas relativas al tema que pueden ser halladas en la extensa obra del sabio Arquímedes.\\ No se posee evidencia contundente de que el genio siracusano escribiera un tratado sobre las \textit{cónicas} por separado.\\ La idea de que él lo hizo reposa en efecto sobre una base no muy substancial en la referencia para \textLipsias{konik\A{a} stoiqe\C{i}a} [\textit{elementos de las cónicas}] (sin alguna mención sobre el nombre del autor) en dicho \textit{folium}, que ha sido asumida como una referencia para un tratado del mismo Arquímedes. Pero dicha suposición sencillamente puede ser refutada cuando las referencias son comparadas con una cita similar en otro pasaje\footnote{Cf. \textit{Sobre la Esfera y el Cilindro. I. pp. 24.} en Heiberg. \textit{Archimedis Opera Omnia.} Vol. I. pp. 24. La proposición citada está en \textit{EE}. XII. 2: \textit{Los círculos son el uno al otro como los cuadrados de sus diámetros.}\\ Esta proposición (i.e., \textit{EE} X.I.) debe ser bien recordada porque es el lema requerido en la demostración que efectúa Euclides de la proposición 2, libro XII para el efecto en el que los círculos son el uno al otro como los cuadrados de sus diámetros. Algunos escritores parecen tener la impresión de que XII.2 y las otras proposiciones en el libro XII, en que el método de exhausión  es empleado, son los únicos lugares (\textit{hapax legomena}) donde Euclides hace uso de X.I. Además, es comúnmente visto que X.I. tendría que ser jústamente referida al inicio del libro XII. Cantor (cf. \textit{Gesch. d. Math. $I_3$. p. 269.}) observa: <<Euclides no hace ninguna referencia a este [X.I.], inclusive nosotros no tenemos nada más que esperar algo, a saber que, si dos magnitudes son inconmensurables, nosotros siempre podemos formar una magnitud conmensurable con la primera que será diferente de la segunda magnitud por pequeña que a nosotros nos plazca>>. Pero, sin hacer uso de X.I. antes de XII.2, Euclides emplea esta en toda la proposición siguiente, (X2). Se tiene entonces que X.2 produce un criterio para la inconmensurabilidad de dos magnitudes (un preliminar muy necesario para el estudio de los inconmensurables); X.I. será exactamente este. Euclides emplea X. I. para demostrar no únicamente XII.2 sino XII.5 (i.e., las pirámides con la misma altura y base triangular no son la una a la otra como sus bases), por medio del cual él demuestra XII.7 y Porisma, que alguna pirámide es la tercera parte del prisma que tendrá la misma base e igual altura y XII.10 (i.e., que algún cono es una tercera parte del cilindro que tendrá la misma base e igual altura), entre otras proposiciones similares. Ahora, XII.7, Porisma y XII.10 son teoremas específicamente atribuidos a Eudoxus por Arquímedes (cf. \textit{Sobre la esfera y el cilindro}, prefacio), quien dice en otro lugar (cf. \textit{Cuadratura de la parábola}, prefacio) que el primero de los dos, y los teoremas sobre los círculos que son el uno al otro como los cuadrados de sus diámetros, se demuestra por cierto lema que él establece como sigue: << de líneas desiguales, superficies desiguales, o sólidos desiguales, el más grande excede al menor por tal magnitud como sea posible, si se agrega [continuamente] a sí misma, excedería alguna magnitud de aquellas que son comparadas una con otra>>, i.e., de magnitudes del mismo tipo como las magnitudes originales. Arquímedes también afirma (loc. cit.) que el segundo de los dos teoremas que él atribuye a Eudoxus (EE. XII.10) fueron demostrados por medios de un <<Lema similar mencionado anteriormente>>. El lema establecido así por Arquímedes es decididamente diferente del X.I, el cual sin embargo, Arquímedes mismo empleó en varias ocasiones, mientras él se refiere al uso de éste en XII.2 (cf. \textit{Sobre la esfera y el cilindro}, I.6.). La aparente dificultad causada por la mención de los dos lemas en conexión con el teorema de Euclides XII.2 puede ser explicada por la referencia a la demostración de X.I. Euclides aquí toma la magnitud menor y dice que es imposible, por la multiplicación de esta, que alguna vez exceda las más grande, y claramente emplea el estamento de la cuarta definición del libro V, para el efecto de que <<se dice que las magnitudes están en proporción una con otra, si multiplicándolas, se exceden una a la otra>>. Desde entonces la magnitud más pequeña en X.I puede ser vista como la diferencia entre las dos magnitudes desiguales, y es claro que el lema establecido por Arquímedes es en substancia empleado para demostrar el lema X.I que parece jugar un papel mucho más extenso en las investigaciones sobre la cuadratura y la cubatura.} en el que por las palabras \textLipsias{\A{e}n t\C{h} stoiqei\A{w}dei} [\textit{en sus elementos}] en los \textit{Elementos de Euclides} indudablemente lo dan a entender.\\ De manera similar las palabras <<esto es demostrado en los elementos de las cónicas>> simplemente significan que esto está hecho en los libros de texto sobre los principios elementales de las cónicas. Una demostración positiva es que esto puede ser rastreado desed un pasaje sobre los comentarios de Eutocious a la obra de Apollonius. Heracleides,\footnote{El nombre aparece en el pasaje refiriéndose a este como \textLipsias{\<Hrakleios}. Cf. \textit{Apollonius} (ed. Heiberg) Vol. II. pp. 168.} el biógrafo de Arquímedes, puede ser citado diciendo que <<Arquímedes fue el primero en inventar teoremas sobre las cónicas, y Apollonius había encontrado que ellos no habían sido publicados por Arquímedes, apropiándose de ellos>>;\footnote{La sentencia de Heracleides de que Arquímedes fue el primero en <<inventar>> (\textLipsias{\A{e}pino\C{h}sai}) teoremas sobre las cónicas no es sencillo de explicar. Bretschneider (pp. 156.) afirma, con respecto al cargo de plagio levantado contra Apollonius, bajo la malicia de las pequeñas mentes tendrían probablemente que buscar venganza por ellos mismos para el concepto en que ellos tendrían que ser ayudados por un gigante intelectual similar a Apollonius. Heiberg, por otro lado, piensa que esto es injusto para Heracleides, quien probablemente malinterpretó la elaboración del cargo de plagio por encontrar muchas de las proposiciones de Apollonius ya citadas por Arquímedes, como es bien conocido. Heiberg deja en claro también que Heracleides no tuvo la intención de adscribir la invención actual de las cónicas a Arquímedes, sino únicamente la teoría elemental de las secciones cónicas como fueron formuladas por Apollonius debidas al siracusano; por otra parte, la contradicción de Eutocius tendría que tomar una forma diferente y él no tendría que haber omitido el punto bien conocido de que Menaechmus fue el descubridor de las secciones cónicas. Cf. Sir Thomas L. Heath. \textit{Apollonius of Perga}. Teatrise of Conic Section. Cap. III. Cambrigde Univ. Press. 1896.} y Eutocius se une a la observación alegando que en su opinión esto no es verdadero, <<por un lado Arquímedes aparece en muchos pasajes haciendo referencia a los elementos de las cónicas como un tratado antiguo (\textLipsias{\A{w}c palaiot\A{e}ras}) [conocidos previamente], y por otro lado, Apollonius no parece haber enseñado sus propios descubrimientos>>.\\ Así Eutocius estimó la referencia al inicio de una exposición temprana de la teoría elemental de las cónicas por otros geómetras; por otra parte, i.e., si él tuvo que pensar que Arquímedes se refería a un tratado temprano de su propia autoría, él no podría haber empleado la palabra \textLipsias{palaiot\A{e}ras} [previo] en lugar de alguna expresión similar para \textLipsias{pr\A{o}teron \A{e}kdedom\A{e}nhc} [antiguamente emitido].\\ En la investigación de las variadas proposiciones sobre las cónicas que pueden ser halladas en Arquímedes, es natural un vistazo, en primera instancia, para mostrar cómo el sabio siracusano está al tanto de la posibilidad para producir las tres secciones cónicas de los otros conos, en conos rectos y a través de otras secciones planas de aquellas perpendiculares a un generador del cono. Nosotros observamos, primero, que él siempre emplea los antiguos nombres <<sección de un cono de ángulo-recto>>, etc., empleados por Aristaeus, y no debe dudarse de que aquellos tres lugares donde la palabra \textLipsias{\Ar{e}lleuyic} [elipse] aparece no serán relevantes aquí.\\ Al principio del tratado \textit{Sobre conoides y esferoides} (\textLipsias{Konoidec u Esferoidec}) encontramos lo siguiente: <<si un cono es cortado por un plano encontrándose contenido en el cono, la sección será un círculo o una sección de un cono de ángulo-agudo>> (i.e., una elipse). La dirección en que tales proposiciones fueron demostradas en el caso en que la sección del plano forma un ángulo recto con el plano de simetría, puede ser inferida de las proposiciones 7 y 8 del mismo tratado, donde se muestra que es posible encontrar un cono del que una elipse es una sección y cuyo vértice no es una línea recta trazada desde el centro de la elipse (1) perpendicular al plano de la elipse, (2) no es perpendicular a este plano, pero reposa en un plano en ángulo recto a este y pasa a través de uno de los ejes de la elipse. El problema evidentemente pertenece a la determinación de las secciones circulares del cono, y así es como Arquímedes procede:
\begin{enumerate}
\item[(1)] Conciba una elipse con $BB'$ su eje menor y levante un plano perpendicular al plano del papel: suponga que la línea $CO$ traza una perpendicular al plano de la elipse, y sea $O$ el vértice del cono requerido, (véase Fig.\ref{B.1}).
\begin{figure}[ht!]
\begin{center}
\includegraphics[scale=0.5]{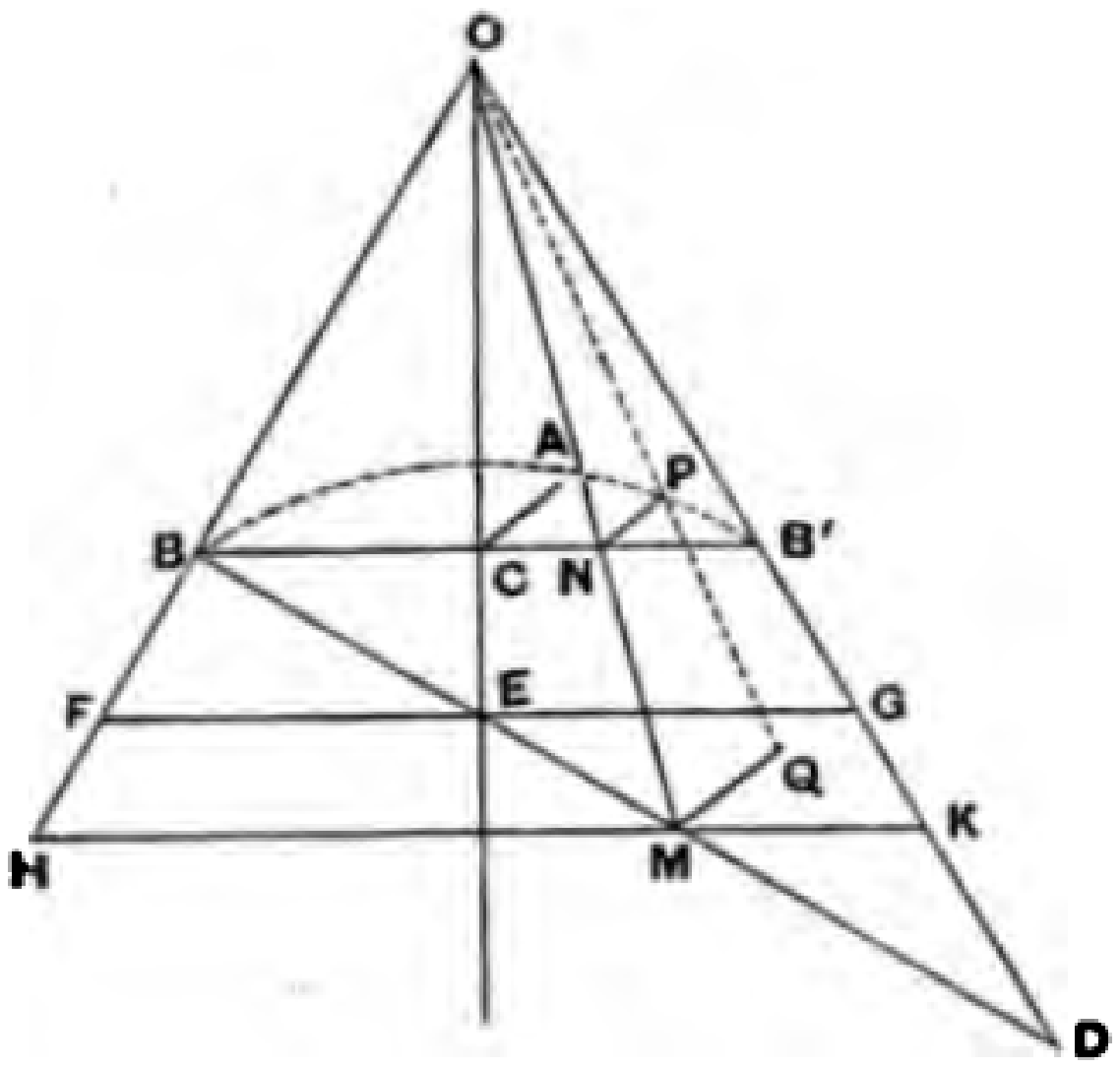}\\
\caption{}\label{B.1}
\end{center}
\end{figure}
\noindent
Tómese $OB, OC, OB',$ y en el mismo plano con ellos trace $BED$ encontrando $OC, OB'$ en $E,D$ respectivamente, y en tal dirección se tiene $$BE\cdot ED=EO^2=CA^2\cdot CO^2$$ (donde $CA$ es la mitad del eje mayor de la elipse.) Y si esto es posible, entonces $$BE\cdot ED:EO^2>BC\cdot CB':CO^2$$ (Ambos, la construcción y ésta última proposición son asumidos como conocidas).\\ Ahora conciba un círculo con $BD$ como diámetro trazado en un plano perpendicular al plano del papel, y describa un cono pasando a través del círculo y tome a $O$ como su vértice.\\ Tenemos que probar entonces que dada una elipse, esta es una sección de dicho cono, o, si $P$ es algún punto sobre la elipse, $P$ está sobre la superficie del cono.\\ Trace $PN$ perpendicular a $BB'$. Una $ON$, y prolongue esta hasta encontrar a $BD$ en $M$, y sea $MQ$ trazada en el plano del círculo sobre $BD$ como diámetro y perpendicular a $BD$, encontrando la circunferencia del círculo en $Q$. También trácese $FG, HK$ a través de $E,M$, respectivamente paralelos a $BB'$.\\ Ahora
\begin{align*}
QM^2:HM\cdot MK&=BM\cdot MD:HM\cdot MK\\
&=BE\cdot ED:FE\cdot EG\\
&=\left(BE\cdot ED:EO^2\right)\cdot\left(EO^2:FE\cdot EG\right)\\
&=\left(CA^2:CO^2\right)\cdot\left(CO^2:BC\cdot CB'\right)\\
&=CA^2:BC\cdot CB'\\
&=PN^2:BN\cdot NB\\
\therefore\,QM^2:PN^2&=HM\cdot MK: BN\cdot NB'\\
&=OM^2:ON^2,
\end{align*}
donde, $PN, QM$ son paralelos, $OPQ$ es una línea recta.\\ Pero $Q$ no es la circunferencia del círculo sobre $BD$ como su diámetro; entonces $OQ$ es un generador del cono, y entonces $P$ está sobre el cono.\\ Así el cono pasa a través de todos los puntos de la elipse dada.\\
\item[(2)] Sea $OC$ no perpendicular a $AA'$, uno de los ejes de la elipse dada, y el plano del papel está conteniendo a $AA'$ y $OC'$, así que el plano de la elipse es perpendicular al plano. Sea $BB'$ el otro eje de la elipse, (véase Fig.\ref{B.2}).
\begin{figure}[ht!]
\begin{center}
\includegraphics[scale=0.5]{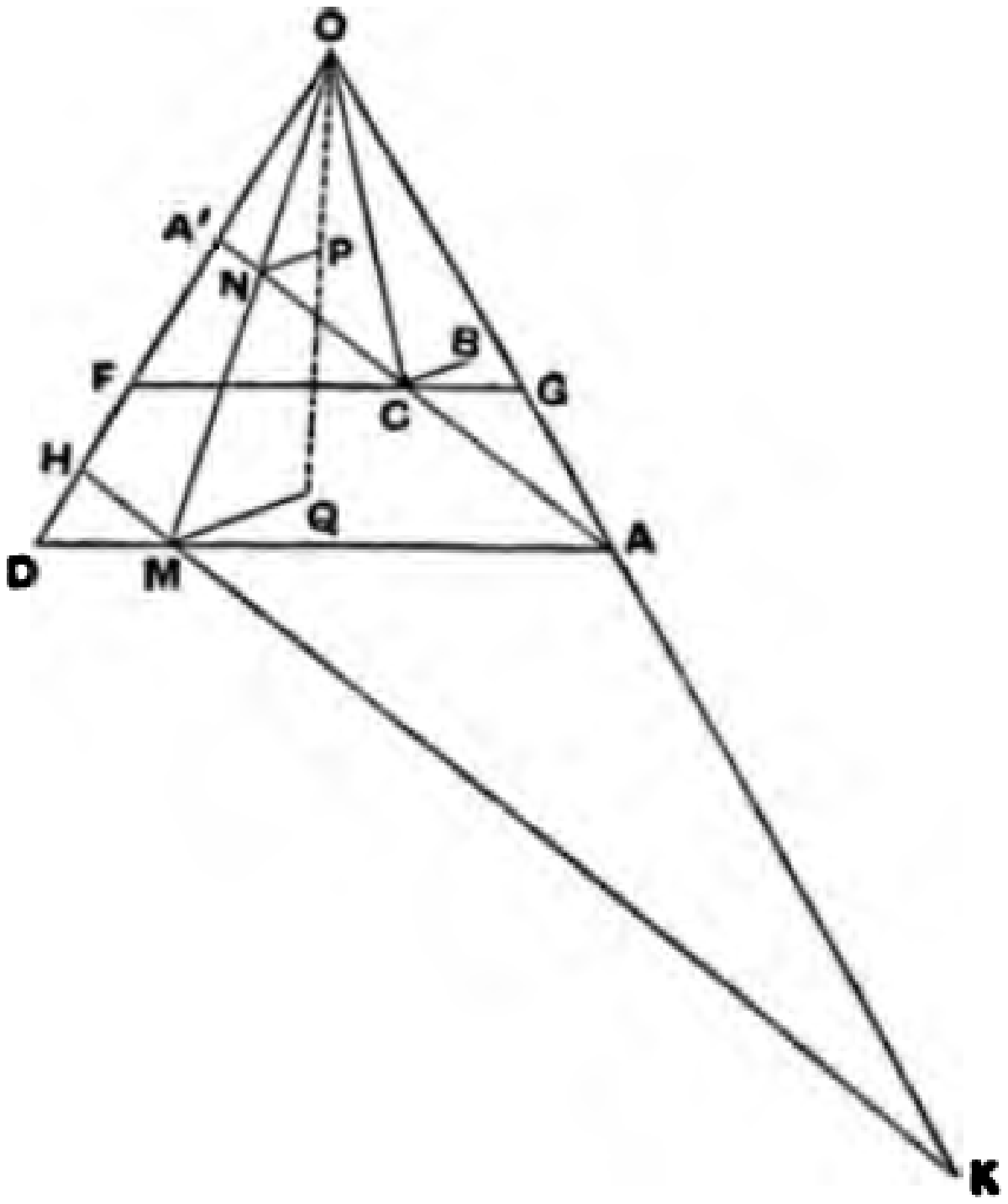}\\
\caption{}\label{B.2}
\end{center}
\end{figure}
\newline
Ahora $OA, OA'$ no son iguales. Prolóngese $OA'$ hasta $D$ así que $OA=OD$. Una $AD$, y trace $FG$ a través de $C$ paralelo a este.\\ Conciba un plano a través de $AD$ perpendicular al plano del papel, y en este describa\\
\item[(a)] si $CB^2=FC\cdot CG$, un círculo con diámetro $AD$, o\\
\item[(b)] si no, una elipse sobre $AD$ como eje tal que si $d$ es el otro eje $$d^2: AD^2=CB^2:FC\cdot CB.$$
\end{enumerate}
\par
Tome un cono con vértice en $O$ pasando a través del círculo o una elipse justamente trazada.\\ Esto es posible inclusive cuando la curva sea una elipse, porque la línea trazada desde $O$ al punto medio de $AD$ es perpendicular al plano de la elipse, y la construcción es similar a la precedente en el caso (1).\\ Sea $P$ algún punto sobre la elipse, y nosotros únicamente tenemos que probar que $P$ está sobre la superficie del cono así descrito.\\ Tracemos $PN$ perpendicular a $AA'$. Unamos $ON$, y prolonge esta para encontrar $AD$ en $M$. A través de $M$ trace $HK$ paralela a $AA'$. Finalmente, trace $MQ$ perpendicular al plano del papel (y entonces perpendicular a $HK$ y $AD$) encontrando la elipse o círculo sobre $AD$ (y entonces la superficie del cono) en $Q$.\\ Entonces
\begin{align*}
QM^2:HM\cdot MK&=\left(QM^2:DM\cdot MA\right)\cdot\left(DM\cdot MA:HM\cdot MK\right)\\
&=\left(d^2: AD^2\right)\cdot\left(FC\cdot CG:A'C\cdot CA\right)\\
&=\left(CB^2:FC\cdot CG\right)\cdot\left(FC\cdot CG: A'C\cdot CA\right)\\
&=CB^2: A'C\cdot CA\\
&=PN^2:A'N\cdot NA\\
\therefore\, QM^2:PN^2&=HM\cdot MK:A'N\cdot NA\\
&=OM^2:ON^2
\end{align*}
Por tanto $OPQ$ es una línea recta, y, $Q$ está sobre la superficie del cono, y esto se sigue de que $P$ está también sobre la superficie del cono.\\ La demostración de que las tres cónicas pueden ser producidas por medio de secciones de algún cono circular, recto u oblicuo, que son hechas por planos perpendiculares al plano de simetría, pero no necesariamente perpendicular a las líneas generatrices del cono, es de hecho esencialmente la misma que la demostración para la elipse.\\ Debe entonces inferirse que Arquímedes estaba igualmente al tanto sobre el hecho de que la parábola y la hipérbola podrían ser halladas por métodos más antiguos. Él continua empleando los nombres antiguos para las curvas y en dicho contexto la elipse fue llamada <<sección del cono de ángulo agudo>>, despues fue descubierto que esta podría ser producida por medio de un plano que las corte todas desde de un cono, siempre y cuando lo haga en ángulo recto. Heiberg concluye que Arquímedes únicamente obtuvo la parábola empleando los antiguos métodos porque él describe el parámetro como el doble de la línea entre el vértice de la parábola y el eje del cono, que es únicamente correcto en el caso del cono de ángulo recto; pero esto no es más que una objeción para el uso continuo del término arquimediano <<sección de un cono de ángulo agudo>> para significar que la elipse fue hallada de forma diferente. Zeuthen señala, como evidencia posterior, el hecho de que nosotros tenemos las siguientes proposiciones enunciadas por el siracusano sin demostración (\textit{Sobre conoides y esferoides}, 11):
\begin{enumerate}
\item[(1)] Si un conoide de ángulo recto [un paraboloide de revolución] es cortado por un plano a través del eje o paralelo al eje, la sección será la de
un cono de ángulo recto y la misma que comprende la figura (\textLipsias{\As{a} a\G{u}t\G{a} t\C{a} perilambano\A{u}sa t\G{o} sq\C{h}ma}) [aquellas figuras que incluyen la forma]. Y su diámetro [eje] será la sección común del plano que corta la figura y que es trazada a través del eje perpendicular al plano de corte.\\
\item[(2)] Si un conoide de ángulo obtuso [hiperboloide de revolución] es cortado por un plano a través del eje o paralelo al eje o a través del cono
envolvente (\textLipsias{peri\A{e}qontoc}) [que contiene] el conoide, la sección será una sección de un cono de ángulo obtuso: si [el plano de corte pasa] a través del eje, el mismo que está comprendido en la figura: si es paralelo al eje, o similar a este; y si es a través del vértice del cono envolvente del conoide, es no similar. Y el diámetro [eje] de la sección será una sección común para el plano que corta la figura y de la que se traza el eje en ángulos rectos al plano de corte.\\
\item[(3)] Si alguna de las figuras esferoidales son cortadas por un plano a través del eje o paralelo al eje, la sección será una sección de un cono de
ángulo agudo: si es a través del eje, la sección actual que se obtiene es la que comprende la figura: si es paralelo al eje, es similar a este.
\end{enumerate}
\par
Arquímedes agrega que las demostraciones de todas aquellas proposiciones son obvias. Sí es entonces tolerablemente cierto que ellas están basadas sobre los mismos principios como sus demostraciones tempranas relativas a las secciones de una superficie cónica y las demostraciones dadas en sus investigaciones posteriores sobre las secciones elípticas para las variadas superficies de revolución. Aquellas dependen, como se verá, de la proposición que afirma, que si dos cuerdas trazadas en direcciones fijas intersectan un punto, la proporción de los ángulos rectos bajo los segmentos es independiente de la posición del punto. Esta corresponde exactamente a la empleada en las demostraciones anteriores sobre el cono, para la proposición en que, si las líneas rectas $FG, HK$ son trazadas en direcciones fijas entre dos líneas formando un ángulo, y si $FG,HK$ se encuentran en algún punto $M$, la proporción $FM\cdot MG:HM\cdot MK$ es constante; la propiedad posterior es en efecto el caso particular en que la cónica se reduce a dos líneas rectas.\\ La siguiente es una reproducción, dada para un ejemplo, de la proposición (13)\footnote{Cf. Heiberg. Op. cit. Vol.I. pp. 348 y ss.} del tratado \textit{Sobre conoides y esferoides}, que prueba que la sección de un conoide de ángulo obtuso [un hiperboloide de revolución] por algún plano que encuentra todos los generadores del cono envolvente, y que no es perpendicular al eje, es una elipse cuyo eje mayor es la parte interceptada en el interior del hiperboloide de la línea de intersección para el plano de corte y el plano a través del eje perpendicular a este.
%%%%%%%%%%%%%%%%%%%%%%%%%%%%%%%%%%%%%%%%%%%%%%%%%%%%%%%%%%%%%%%%%%%%%%%%%%%%%%%%%%%%%%%%%%%%%%%%%%%%%%%%%%%%%%%%%%%%%%%%%%%%%%%%%%%%%%%%%%%%%%%%%%%%%%%%%%%%%%%%%%
\section{Los elementos de trabajo de Arquímedes}
Propiamente hablando sobre los \textit{Elementos de Euclides}, los matemáticos griegos estaban en posesión de una colección ordenada sistemáticamente de proposiciones matemáticas\footnote{Para un estudio detallado y pormenorizado de los recursos lingüísticos empleados por los matemáticos griegos cuando efectúan la demostración de un resultado, véase: Fabio Acerbi. \textit{La sintassi logico della matematica greca.} Libro I.: \url{https://hal.archives-ouvertes.fr/hal-00727063/file/LibroLogica1.pdf}., versión 1. 4 sep. 2012.\\ \textit{The language of the <<Givens>>: its forms and it is use deductive tool in Greek mathematics.} Arch. Hist. Exac. Sci. (2011) 65: 119-153.} sobre las que ellos tendrían que basar sus investigaciones futuras. De esta manera se exhoneraban de la obligación de revertir en sus trabajos los temas de contenido elemental, i.e., de naturaleza fundamental: cuando una proposición estaba contenida en los \textit{Elementos}, esto era suficiente, para el aparato de difusión universal del trabajo, y era una razón suficiente para no mencionar esta.\\ El grado de conocimiento matemático requerido en la lectura moderna de los trabajos de Arquímedes constituye infrecuentemente una seria barrera para la apreciación de ellos, porque él adquiere su conocimiento de los elementos de su ciencia de una manera enteramente diferente de los discípulos de Euclides. Además, Euclides acostumbra a preceder el núcleo real de sus tratados por numerosos lemas, cuyo propósito no será aparentemente útil hasta mucho después.\footnote{Cf. \textit{Archimedes} E.J. Dijksterhuis. transladed by C. Dikshoorn with a new bibliographic essay by Wilbur R. Knorr. Princeton Univ. Press. 1987. pp. 49 y ss.}\\ Continuando con la referencia a la obra del Magno Arquímedes, en la carta nuncupatoria a \textit{Dositheus}\footnote{Cf. \textit{Sobre conoides y esferoides.} Introducción. En: \textit{The works of Archimedes} Edit in modern notation with introductory chapters edited by Sir Thomas L. Heath. pp. 99-151. Cambridge University Press. This digitally printed version 2010.} que sirve como exordio al tratado \textit{Sobre conoides y esferoides}, las siguientes figuras son definidas:
\begin{enumerate}
\item[(I)] Cuando un \textit{ortotoma}\footnote{Es un término antiguo, ya en desuso, utilizado por los antiguos griegos cuando todavía las curvas cónicas no tenían un nombre específico, definiéndose cada una por la forma en que habían sido descubiertas. Así se obtenían oxitomas o secciones de un cono agudo, ortomas o secciones de un cono rectángulo y amblitomas o secciones de un cono obtuso. Arquímedes utilizó estos nombres, aunque según parece, también usó ya el nombre de \textit{parábola}, como sinónimo para una sección de un cono rectángulo. Pero fue realmente Apollonius, posiblemente siguiendo una sugerencia de Arquímedes, quién introdujo por primera vez los nombres de \textit{elipse} y de \textit{hipérbola} en conexión con estas curvas.} (\textLipsias{\>{o}rtotwma}) rota sobre el diámetro, una figura es generada y es llamada un \textit{conoide de ángulo recto} (\textLipsias{\>{o}rjog\A{w}niou qwnoeid\A{e}c}); que puede ser trasladado como (\textLipsias{\>{o}rtoconoide}). Cuando paralelo a algún plano tangente a esta se traza un plano que la corta, este plano junto con la superficie determinan un \textit{segmento de conoide} (\textLipsias{tm\C{a}ma to\C{u} qwnoeid\A{e}oc}), del que la base es la sección para el corte del plano con el conoide; el vértice es el punto de contacto con el plano tangente, el eje la parte de corte externa por los planos de la línea recta a través del vértice del segmento paralelo al eje de revolución.\\ El ortoconoide es aparentemente un \textit{paraboloide de revolución}.\\
\item[(II)] A través de la rotación de un \textLipsias{\>{a}mblutome} sobre el diámetro de un conoide de ángulo obtuso (\textLipsias{\>{a}mblug\A{w}nion qwnoeid\A{e}c}), que puede ser trasladado como \textit{ambliconoide}, se obtiene entonces un \textit{hiperboloide de revolución de una o dos hojas}. Las asíntotas de la sección durante la rotación generan el cono envolvente (\textLipsias{q\A{w}nos peri\A{e}qwn}). De un segmento como el definido en (I) el eje es la parte de corte externa por el plano tangente paralelo a la línea recta que une el vértice del cono envolvente con el vértice del segmento. La parte de la línea recta entre aquellos puntos por si mismos es llamado \textit{eje producido} (\textLipsias{poteo\C{u}sa t\C{w} \As{a}xoni}\,$=\text{adyacente al eje}$).\\
\item[(III)] A través de la rotación del \textit{oxit\={o}ma} (\textLipsias{\>{o}xitwma}) sobre un diámetro se genera el \textit{esferoide} (elipsoide de revolución), que es llamado \textit{prolato} (\textLipsias{param\C{a}qec sfairoid\A{e}c}) u \textit{oblato} (\textLipsias{\G{e}piplat\G{u} sfairoeid\A{e}c}), de acuerdo a la sección que ha rotado sobre el diámetro mayor o menor.\\ El eje y el vértice son definidos en forma similar a la anteriormente descrita, el centro es el centro de la sección rotante, el diámetro de la sección son los ángulos rectos al eje de rotación. Un plano para el corte junto con el esferoide determina dos segmentos, los vértices que son los puntos donde los planos tangentes paralelos al plano de corte tocan la superficie, las partes determinadas por el plano de corte sobre el segmento de línea entre los puntos de contacto son llamados los \textit{ejes}.
\end{enumerate}
%%%%%%%%%%%%%%%%%%%%%%%%%%%%%%%%%%%%%%%%%%%%%%%%%%%%%%%%%%%%%%%%%%%%%%%%%%%%%%%%%%%%%%%%%%%%%%%%%%%%%%%%%%%%%%%%%%%%%%%%%%%%%%%%%%%%%%%%%%%%%%%%%%%%%%%%%%%%%%%%%%
\section{El ortoconoide (paraboloide de revolución)}
\begin{propo}[C.S. 11(1)\footnote{Cf. DE CONOIDIBUS ET SPHAEROIDIBUS pp. 341 y ff. en \textit{Archimedis Opera Omnia Vol.I. CUM COMMENTARIIS EUTOCII.} J. L. Heiberg. B. G. TEUBNERL. 1880.}]
Si un \textit{ortoconoide} es cortado a través de un plano, o paralelo al eje, la sección será el mismo \textit{ortotoma} que está contenido en la figura que revoluciona, y su diámetro será la sección común del plano de intersección de la figura y el plano levantado a través del eje en ángulos al plano de corte.
\end{propo}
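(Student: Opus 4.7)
El plan es separar la demostración en los dos casos contemplados por el enunciado: cuando el plano de corte contiene al eje del ortoconoide, y cuando es paralelo al eje sin contenerlo. El primer caso es casi inmediato por la propia definición: como el ortoconoide es, por construcción, la superficie engendrada al rotar el ortotoma alrededor de su eje, todo plano que contenga dicho eje intersecta a la figura exactamente en la parábola generatriz (las dos semirramas que reconstituyen la curva completa), y por lo tanto la sección coincide en diámetro y parámetro con la generatriz.

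Para el caso no trivial, sea $AA'$ el eje del ortoconoide, sea $p$ el parámetro (latus rectum) de la parábola generatriz, sea $\pi$ el plano de corte paralelo a $AA'$ a distancia $d$ del eje, y sea $\pi'$ el plano que contiene a $AA'$ y es perpendicular a $\pi$. La recta $\ell=\pi\cap\pi'$ será la propuesta por el enunciado como diámetro de la sección. La estrategia consiste en tomar un punto arbitrario $P$ de la intersección del ortoconoide con $\pi$, trazar $PN$ perpendicular a $\ell$ con pie $N\in\ell$, y verificar la existencia de un punto fijo $N_0\in\ell$ tal que $PN^2=p\cdot NN_0$, que es el síntoma antiguo de la parábola de parámetro $p$.

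El cálculo clave descansa en dos hechos elementales: a la altura $h$ del punto $N$ sobre el eje (medida desde el vértice del ortoconoide), la sección del ortoconoide por un plano normal al eje es un círculo cuyo radio $r$ cumple $r^2=p\,h$ (que es justamente el síntoma de la generatriz puesto a rotar); y, por el teorema de Pitágoras aplicado en dicho plano horizontal, $PN^2=r^2-d^2=p\,h-d^2$. Definiendo $N_0\in\ell$ como el punto a la altura $h_0=d^2/p$, se obtiene $PN^2=p\,(h-h_0)=p\cdot NN_0$, con lo cual la sección resulta ser una parábola de idéntico parámetro al de la generatriz, es decir, \emph{el mismo} ortotoma contenido en la figura que revoluciona.

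El paso más delicado será justificar que $N_0$ es en efecto el vértice de la sección y $\ell$ su diámetro en el sentido clásico: esto se desprende de la simetría respecto a $\pi'$, que en cada altura intercambia los dos puntos del ortoconoide que yacen sobre $\pi$, forzando a sus puntos medios (los pies $N$ de las perpendiculares $PN$) a situarse en $\ell$, y al punto $N_0$ a corresponder con el plano tangente al ortoconoide paralelo a $\pi$. Este patrón argumental es el mismo empleado en las demostraciones previas del tratado sobre las secciones de un cono mediante planos, y descansa, en última instancia, en la propiedad mencionada anteriormente de que el producto de los segmentos de cuerdas trazadas en direcciones fijas por un punto guarda una razón constante, propiedad que constituye el núcleo común de estas demostraciones arquimedianas.
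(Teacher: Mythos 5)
Tu propuesta es correcta y sigue esencialmente el mismo camino que la demostración del artículo (la reconstrucción de Dijksterhuis): tu cálculo $PN^2=r^2-d^2=p\,h-d^2=p\cdot NN_0$ reproduce paso a paso la cadena $T(\Theta\Delta)=O(E\Delta,Z\Delta)=T(BE)-T(B\Delta)=O(N,AB)-O(N,AH)=O(N,\Gamma\Delta)$. Las únicas diferencias son cosméticas: usas Pitágoras en el plano horizontal en lugar de la propiedad de la semicuerda del círculo, y defines el vértice $N_0$ por su altura $d^2/p$ en vez de tomarlo directamente como el punto $\Gamma$ donde la recta $\ell$ corta la superficie.
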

\begin{proof}
La demostración, que está perdida en el tratado de Arquímedes (esta es requerida únicamente para un plano paralelo al eje), puede ser hallada como sigue (ver Fig. \ref{B.3})\footnote{Cf. Dijksterhuis. op.cit. pp.113 y ss.}
\begin{figure}[ht!]
\begin{center}
\includegraphics[scale=0.5]{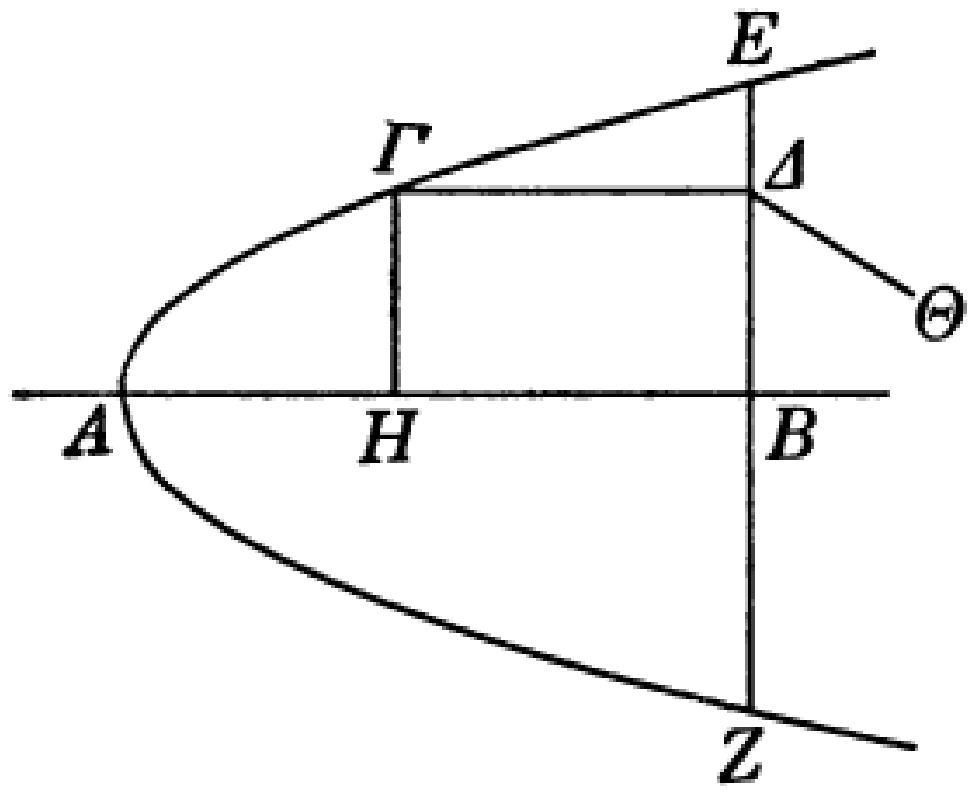}\\
\caption{}\label{B.3}
\end{center}
\end{figure}
\newline
Sea $AB$ el eje, \textLipsias{GD} la intersección entre el plano del papel y el plano de corte en ángulos rectos. Se traza una línea recta a través de \textLipsias{D} perpendicular al plano del papel (i.e., en el plano de corte) encontrando la superficie en \textLipsias{Q}. Entonces, si el orto para la sección meridiana es $N$, tenemos
\begin{align*}
T(\Theta\Delta)&=O(E\Delta,Z\Delta)=T(BE)-T(B\Delta)\\
&=O(N,AB)-T(\Gamma H)=O(N,AB)-O(N,AH)\\
&=O(N,BH)=O(N,\Gamma\Delta)
\end{align*}
La sección entonces es un ortotoma con vértice en $\Gamma$, diámetro $\Gamma\Delta$, y el mismo orto como $EAZ$, i.e., iguales y similares.
\end{proof}
\begin{propo}[C.S.12\footnote{Cf. Heiberg Op.cit. pp. 345-349.}] Si un ortoconoide es cortado por un plano que pasa a través del eje, no paralelo al eje, no perpendicular al eje, la sección será un oxitoma; el diámetro más grande será la parte de corte en el interior del conoide de la intersección entre el plano de corte y el plano a través del eje en ángulos rectos al plano de corte, el diámetro menor será igual a la distancia entre las líneas rectas trazadas desde las extremidades del diámetro mayor, paralelo al eje.
\end{propo}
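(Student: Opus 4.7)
El plan es adaptar, con el espíritu arquimediano de la Prop.~C.S.~11, dos propiedades ya disponibles del ortoconoide: que las secciones por planos que contienen al eje (o son paralelos a él) son ortotomas iguales a la generatriz (Prop.~C.S.~11(1)), y que las secciones por planos perpendiculares al eje son círculos, por ser el sólido de revolución. La combinación de ambas es lo que permite reducir el problema a propiedades conocidas de la parábola y del círculo.

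Llamaré $\Pi$ al plano de corte y $A$ al plano a través del eje perpendicular a $\Pi$, y pondré $\ell=A\cap\Pi$. Como $\Pi$ no es paralelo ni perpendicular al eje, $\ell$ corta al ortotoma $A\cap(\text{paraboloide})$ en dos puntos $P,P'$; la cuerda $PP'$ se propone como diámetro mayor. Para un $Q$ cualquiera de la sección, sea $M$ el pie de la perpendicular bajada desde $Q$ a $\ell$ dentro de $\Pi$. La ortogonalidad $\Pi\perp A$ implica que $QM$ es además perpendicular a $A$; en particular $Q$ y $M$ están a la misma altura sobre el eje. El plano horizontal por $Q$ corta entonces al paraboloide en un círculo $K$ que pasa por $Q$ y corta a $A$ en un diámetro $ST$ de $K$, con $S,T$ sobre el ortotoma. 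Como $QM\perp ST$ dentro de $K$, se obtiene
\[
QM^{2}=SM\cdot MT.
\]

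El paso decisivo es probar que la razón $SM\cdot MT:PM\cdot MP'$ es independiente de la posición de $M$ en $\ell$. Esto es exactamente el principio recordado en el pasaje introductorio: en una cónica, la proporción de los rectángulos bajo los segmentos de dos cuerdas de direcciones fijas, que se cortan en un punto, no depende del punto. Aquí las direcciones fijas son la horizontal (la de la cuerda $ST$, común a todo $M\in\ell$) y la de $\ell$ (la de la cuerda $PP'$, que pasa por todo $M\in\ell$). Llamando $\kappa$ al valor común de la razón, se sigue
\[
QM^{2}:PM\cdot MP'=\kappa,
\]
que es la caracterización oxitómica (el \emph{symptoma} de la elipse) respecto al diámetro $PP'$; la sección es así un oxitoma de diámetro mayor $PP'$.

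Finalmente, para el diámetro menor coloco $M$ en el punto medio de $PP'$: entonces $PM\cdot MP'=(PP'/2)^{2}$ y el semi-diámetro conjugado vale $(PP'/2)\sqrt{\kappa}$. Una identificación elemental muestra que $\sqrt{\kappa}$ coincide con el coseno del ángulo que $\ell$ forma con la horizontal, de suerte que el diámetro menor es la proyección horizontal de $PP'$, esto es, la distancia entre las rectas paralelas al eje trazadas por $P$ y $P'$, tal como afirma el enunciado. El obstáculo técnico principal reside en la verificación sintética, al modo arquimediano, de la constancia de $\kappa$: hace falta aplicar el \emph{symptoma} del ortotoma con cuidado en la diferencia de pendientes entre las cuerdas horizontales $ST$ y la cuerda oblicua $PP'$, sin recurrir al cálculo algebraico moderno.
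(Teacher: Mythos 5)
Tu propuesta es correcta y sigue esencialmente el mismo camino que la demostración del artículo: allí se toma $K\Theta$ perpendicular a $A\Gamma$, se usa que el plano perpendicular al eje corta al conoide en un círculo para obtener $T(K\Theta)=O(E\Theta,Z\Theta)$, y se invoca la Proposición 3 de \textit{Sobre conoides y esferoides} (la constancia de la razón de los rectángulos bajo cuerdas de direcciones fijas, que es exactamente tu constante $\kappa$) para llegar al \emph{symptoma} del oxitoma. La única diferencia es de presentación: el artículo identifica la razón constante mediante la tangente $NM$ paralela a $A\Gamma$ y la igualdad $NT=MT$, mientras que tú la expresas como el coseno del ángulo de inclinación de $\ell$; ambas vías conducen a la misma determinación de los diámetros mayor y menor.
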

\begin{proof}
En la figura \ref{B.4} sea $A\Gamma$ la intersección entre el plano meridiano, que es el plano del papel, y el plano de corte en ángulo recto, sea $K$ un punto de la intersección entre el plano de corte y la figura, $K\Theta$ la perpendicular de $K$ a $A\Gamma, EZ$ la intersección entre el plano del papel y un plano levantado a través de $\Theta$ en ángulo recto al eje; este plano intersecta al conoide en un círculo sobre $EZ$ como diámetro.
\begin{figure}[ht!]
\begin{center}
  \includegraphics[scale=0.5]{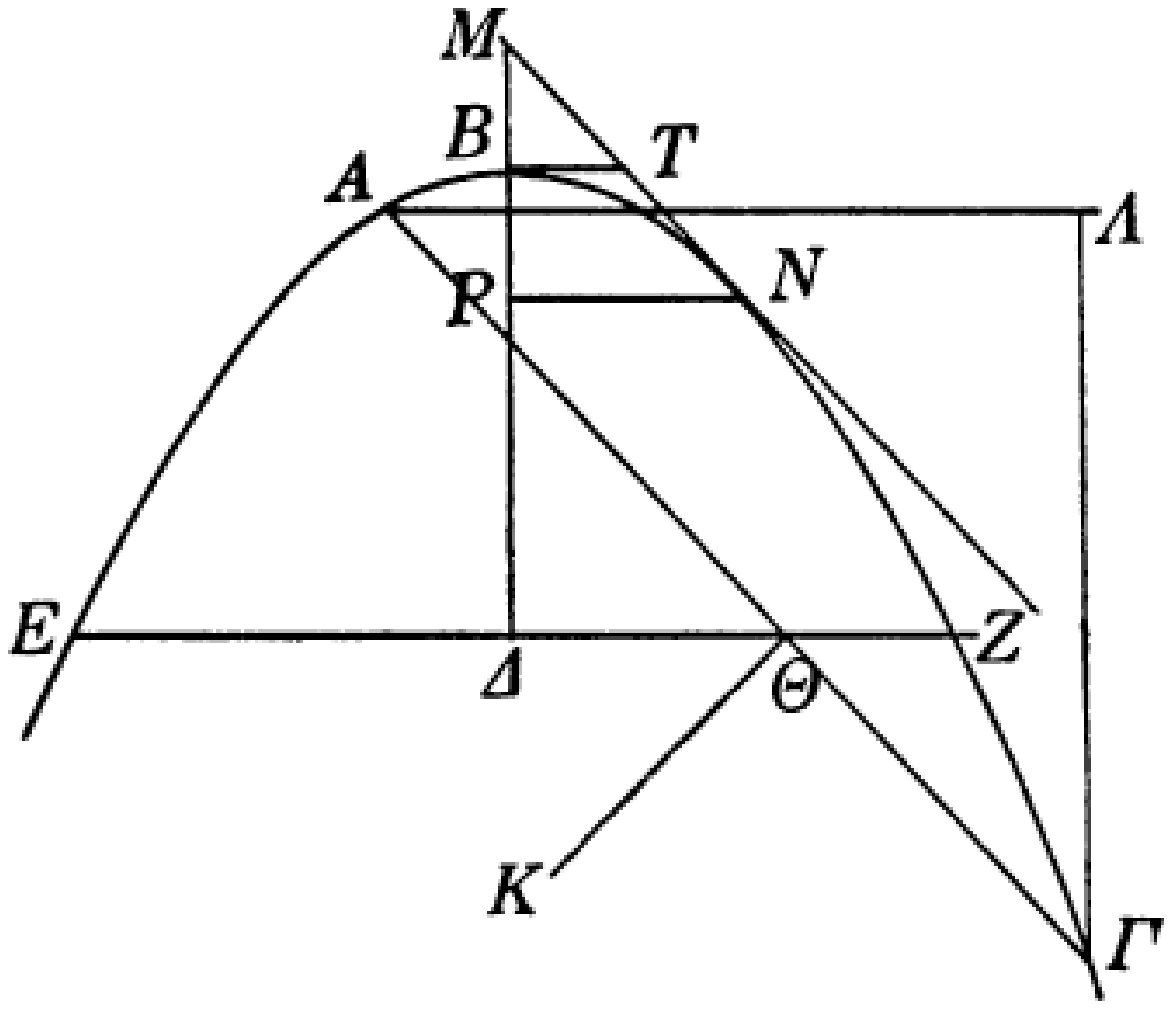}\\
  \caption{}\label{B.4}
\end{center}
\end{figure}
\newline
Ahora $$T(K\Theta)=O(E\Theta, Z\Theta).$$ Ahora, empleando la proposición 3 de \textit{conoides y esferoides}, se tiene que $$[O(E\Theta, Z\Theta), O(A\Theta, \Gamma\Theta)]=[T(BT),T(NT)]$$ cuando la tangente $NM$ paralela a $A\Gamma$ encuentra la tangente al vértice (paralelo a $EZ$) en $T$ porque $NT=MT$ (que se sigue de $PB=BM$) se tiene que $$[T(K\Theta), O(A\Theta, \Gamma\Theta)]=[T(BT), T(MT)]=[T(A\Lambda), T(A\Gamma)]$$ entonces $A\Lambda$ y $A\Gamma$ son respectivamente el diámetro mayor y menor de la oxitoma que es el lugar de \textLipsias{K}.
\end{proof}
\begin{propo}[C.S. 15(1)\footnote{Cf. Heiberg. Op.cit. pp. 357.}] De las líneas rectas, trazadas desde los puntos de un ortoconoide paralelo al eje, las partes que están en la misma dirección de la convexidad (\textLipsias{taqurta}) a la superficie caerán por fuera del conoide; las partes que están en otra dirección caerán en su interior.\\ Este resultado se obtiene con la ayuda de la Proposición 11 (1) correspondiente sobre el ortotoma.\footnote{\textbf{Proposición} [C.S. 11(1)] \textit{Si un paraboloide de revolución es cortado por un plano, o paralelo a este en los ejes, la sección será una parábola igual a la original que por su revolución genera el paraboloide. Y los ejes de la sección serán la intersección entre el plano de corte y el plano a través del eje del paraboloide en ángulos rectos al plano de corte.\\ Si el paraboloide es cortado por un plano en ángulo recto a los ejes, la sección será un círculo cuyo centro está sobre el eje}.\\ Cf. Heath. Op.cit. pp.122 y ss., Heiberg. Op. cit. pp. 341.}
\end{propo}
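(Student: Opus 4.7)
\medskip

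El plan es reducir el enunciado tridimensional al hecho an\'alogo ya conocido para la par\'abola generadora. Sea $P$ un punto del ortoconoide y sea $\ell$ la recta trazada por $P$ paralela al eje de revoluci\'on. Si $P$ no coincide con el v\'ertice, considero el plano $\pi$ determinado por el eje del paraboloide y por $P$: como $\ell$ es paralela al eje y pasa por $P\in\pi$, la recta $\ell$ queda enteramente contenida en $\pi$. En virtud de la Proposici\'on 11(1) ya enunciada, la secci\'on del ortoconoide por $\pi$ es un ortotoma igual y similar a la par\'abola generadora, cuyo eje yace sobre el eje de revoluci\'on y que contiene a $P$. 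En consecuencia todo se reduce a establecer la afirmaci\'on plana correspondiente sobre dicha secci\'on.

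\medskip

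Ese lema plano dice lo siguiente: dada una par\'abola, un punto $P$ sobre ella y la recta paralela al eje trazada por $P$, la semirrecta que parte de $P$ en la misma direcci\'on de la convexidad cae enteramente en el exterior de la curva, mientras que la semirrecta opuesta cae enteramente en su interior. Para demostrarlo aplico dos veces el \emph{sympt\'oma} de la par\'abola, el mismo usado en la Proposici\'on 11(1): denotando por $L$ al orto, por $M$ el pie de la perpendicular de $P$ al eje y por $M'$ el pie de la perpendicular al eje de un punto $Q\neq P$ de dicha recta, se tiene
\[
T(QM')=T(PM)=O(L,AM),
\]
la primera igualdad por el paralelismo de $PQ$ con el eje y la segunda por estar $P$ en la curva. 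Comparando con $O(L,AM')$ --que es el valor que tendr\'ia $T(QM')$ si $Q$ se hallara sobre la par\'abola-- la posici\'on de $Q$ queda decidida: si $AM'>AM$ entonces $T(QM')<O(L,AM')$ y $Q$ cae en el interior; si $AM'<AM$ ocurre lo contrario y $Q$ cae en el exterior. Como $AM'<AM$ significa precisamente que $Q$ se sit\'ua en la direcci\'on de la convexidad respecto de $P$, la conclusi\'on se sigue.

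\medskip

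El \'unico obst\'aculo real es de contabilidad. Conviene tratar por separado el caso en que $\ell$ coincide con el propio eje del ortoconoide --donde una semirrecta entra en el s\'olido desde el v\'ertice y la opuesta cae por fuera de \'el-- y traducir la comparaci\'on anterior al lenguaje arquimediano de rect\'angulos $O(\cdot,\cdot)$ y cuadrados $T(\cdot)$ empleado en la Proposici\'on 11(1). La simetr\'ia de revoluci\'on del ortoconoide interviene \'unicamente a trav\'es de aquella proposici\'on; una vez pasados al plano meridiano $\pi$, la argumentaci\'on es enteramente bidimensional y se apoya en una propiedad elemental del ortotoma que Arqu\'imedes ya da por conocida.
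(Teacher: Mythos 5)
Tu propuesta es correcta y sigue esencialmente el mismo camino que el texto, que se limita a indicar que el resultado <<se obtiene con la ayuda de la Proposici�n 11(1)>> sin desarrollar los detalles: reduces al plano meridiano por el eje y el punto, y decides la posici�n de cada punto $Q$ comparando $T(QM')=O(L,AM)$ con $O(L,AM')$ mediante el \emph{sympt�ma} del ortotoma. El tratamiento aparte del caso en que la recta coincide con el eje completa adecuadamente lo que el art�culo deja impl�cito.
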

\begin{defini}
Todos los ortoconoides (paraboloides de revolución) son similares.
\end{defini}
Lo que esto significa no es mencionado posteriormente. Probablemente la referencia es simple debido a la similaridad de las secciones generadas por los ortoconoides, que ya se sabe que son similares.
\begin{propo}
Todos los ortotomas son similares uno al otro.
\end{propo}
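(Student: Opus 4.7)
El plan es aprovechar el s\'intoma caracter\'istico del ortotoma, el cual, para todo punto $P$ sobre la curva con v\'ertice $A$, pie de ordenada $N$ sobre el di\'ametro y par\'ametro (orto) $p$ asociado a ese di\'ametro, afirma la igualdad $T(PN)=O(p,AN)$; relaci\'on ya explotada impl\'icitamente en la demostraci\'on de la Proposici\'on C.S. 11(1) arriba expuesta. A partir de esta propiedad construir\'ia una correspondencia biyectiva entre los puntos de dos ortotomas cualesquiera que preserve las razones de los segmentos correspondientes, estableciendo as\'i la semejanza en el sentido griego cl\'asico, esto es, razones constantes entre magnitudes an\'alogas y \'angulos correspondientes iguales.

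Sup\'onganse dados dos ortotomas con v\'ertices $A_1, A_2$ y par\'ametros respectivos $p_1, p_2$. Fijar\'ia el factor de escala como $k=p_2/p_1$ y definir\'ia la correspondencia $P_1\mapsto P_2$ de modo que las abscisas cumplan $A_2N_2=k\cdot A_1N_1$ sobre los di\'ametros respectivos, con las ordenadas trazadas en la direcci\'on paralela a las tangentes en los v\'ertices de cada curva. Aplicando el s\'intoma en cada ortotoma se tendr\'a
\begin{align*}
T(P_2N_2) &= O(p_2,A_2N_2)=O(k\,p_1,\,k\,A_1N_1)\\
&= k^2\cdot O(p_1,A_1N_1) = k^2\cdot T(P_1N_1),
\end{align*}
de donde $P_2N_2 = k\cdot P_1N_1$. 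As\'i, abscisas y ordenadas correspondientes guardan la misma raz\'on constante $k$; y como las ordenadas se trazan dentro de cada curva en direcciones paralelas formando \'angulos iguales con sus respectivos di\'ametros, la correspondencia transporta cada tri\'angulo rectil\'ineo inscrito en uno de los ortotomas en un tri\'angulo semejante inscrito en el otro.

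La principal dificultad, y el punto que Arqu\'imedes presumiblemente habr\'ia dejado impl\'icito bajo el lac\'onico apelativo de \emph{obvio}, radica en justificar el paso desde la proporcionalidad puntual entre abscisas y ordenadas hasta la semejanza \emph{global} de las dos curvas entendidas como figuras completas. En el estilo helen\'istico ello se resolver\'ia verificando que toda cuerda, tangente o arco finito de un ortotoma se corresponde, bajo la aplicaci\'on construida, con una figura semejante sobre el otro ortotoma; en lenguaje moderno, bastar\'a observar que la homotecia $(x,y)\mapsto(kx,ky)$ env\'ia la ecuaci\'on $y^2=p_1 x$ en $y^2=p_2x$. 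De este modo la clasificaci\'on de los ortotomas por semejanza se reduce a un \'unico par\'ametro escalable libremente, con lo cual no subsiste ninguna obstrucci\'on y se concluye que todos los ortotomas son semejantes entre s\'i, tal como se afirma.
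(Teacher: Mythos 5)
Tu propuesta es correcta y sigue en esencia el mismo camino que el texto: ambos parten del s\'intoma $T(y)=O(N,x)$ de cada ortotoma, fijan la correspondencia de abscisas en la raz\'on de los ortos (tu $A_2N_2=k\,A_1N_1$ con $k=p_2/p_1$ es exactamente la relaci\'on $(x,\xi)=(N,M)$ del art\'iculo) y deducen que ordenadas y abscisas correspondientes guardan la misma raz\'on, que es justamente la definici\'on de semejanza citada. Tu observaci\'on final sobre la homotecia s\'olo hace expl\'icito lo que el art\'iculo da por concluido al llegar a $(y,x)=(\eta,\xi)$.
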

\begin{proof}
Sobre la rigidez de la definición anterior de similaridad entre dos cónicas, esta hipótesis puede ser probada como sigue:\\
De dos ortotomas sean las ordenadas y las abscisas sucesivamente $y$ y $x,\eta$ y \textLipsias{x}, el ortoide $N$ y $M$.\\ Entonces tenemos
\begin{align*}
T(y)&=O(N,x)\\
T(\eta)&=O(M,\xi)
\end{align*}
Ahora establezcamos entre las abscisas de las dos curvas la relación $$(x,\xi)=(N,M),$$
\begin{align*}
[T(y),T(\eta)]&=[O(N,x), O(M,\xi)]=[T(x), T(\xi)],\\
\shortintertext{entonces}
(y,\eta)&=(x,\xi)\quad\text{o}\quad(y,x)=(\eta,\xi)
\end{align*}
\end{proof}
Se obtiene la siguiente
\begin{propo}
Dos segmentos de ortotomas son llamados similares cuando las bases están en la misma proporción a sus diámetros, mientras en ambos el ángulo entre el diámetro y la base son el mismo.
\end{propo}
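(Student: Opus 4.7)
El plan es deducir esta caracterización de la similaridad de segmentos empleando la proposición precedente, que establece que cualquier par de ortotomas está en correspondencia afín mediante una aplicación $(x,y)\mapsto(\xi,\eta)$ satisfaciendo $(x:\xi)=(y:\eta)=(N:M)$, donde $N,M$ son los ortos de las dos curvas.

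Primero, yo introduciría en cada segmento un sistema de coordenadas oblicuo adaptado al mismo: el eje de las abscisas a lo largo del diámetro del segmento (la recta paralela al eje del ortotoma que pasa por el punto medio de la base), y el eje de las ordenadas paralelo a la base. Respecto a este sistema oblicuo, la propiedad caracterizante del ortotoma continúa teniendo la forma $T(y)=O(N',x)$, donde el \emph{orto oblicuo} $N'$ depende del orto original $N$ y del ángulo $\theta$ entre el diámetro y la base. El extremo de la base (donde la cuerda corta al ortotoma) satisface entonces
\begin{equation*}
T(b/2)=O(N',d),
\end{equation*}
de suerte que la razón $b:d$ junto con el ángulo $\theta$ determinan el segmento salvo un factor de escala.

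Para el segundo segmento se obtiene análogamente $T(b'/2)=O(M',d')$, y los dos segmentos se corresponden---es decir, la similaridad entre los ortotomas completos restringe a una biyección entre los segmentos---precisamente cuando (i) los ángulos oblicuos coinciden, $\theta=\theta'$, y (ii) se cumple $b:d=b':d'$. La proposición precedente proporciona la correspondencia $(x:\xi)=(N':M')$ en los sistemas oblicuos, y bajo (i)--(ii) dicha correspondencia es exactamente una homotecia que envía el primer segmento sobre el segundo, lo que justifica llamarlos \emph{similares}.

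El principal obstáculo es precisar que la hipótesis angular $\theta=\theta'$ es la que obliga a la afinidad entre los ortotomas a preservar la forma del segmento, y no solamente algún invariante más débil como el área: sin ella, dos casquetes parabólicos con iguales razones $b:d$ pero distintos ángulos oblicuos resultan afínmente equivalentes sin ser similares. La verificación rigurosa consiste en comprobar que la transformación $(x,y)\mapsto(\xi,\eta)$, al estar referida al mismo ángulo oblicuo $\theta$ en ambas curvas, preserva automáticamente la proporcionalidad entre longitudes tanto a lo largo del diámetro como a lo largo de la base, cerrando el argumento por la vía iniciada en la proposición previa.
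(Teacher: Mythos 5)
El art\'{\i}culo no ofrece demostraci\'on alguna de este enunciado: lo introduce con un simple <<se obtiene la siguiente>> y lo formula como una definici\'on (<<son llamados similares cuando...>>), que es en efecto la definici\'on arquimediana de segmentos similares. No hay, pues, un argumento del texto con el cual comparar tu propuesta; lo que t\'u haces es justificar que la condici\'on enunciada (igualdad del \'angulo entre di\'ametro y base, e igualdad de la raz\'on $b:d$) equivale a la similaridad euclidiana genuina de los segmentos, y esa justificaci\'on es, en lo esencial, correcta y complementa \'utilmente al texto.

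Queda sin embargo un punto sin sustento dentro del marco del propio art\'{\i}culo: tu afirmaci\'on de que, en coordenadas oblicuas adaptadas al segmento, el s\'{\i}ntoma conserva la forma $T(y)=O(N',x)$ con un <<orto oblicuo>> $N'$. La proposici\'on precedente s\'olo usa $T(y)=O(N,x)$ respecto del eje principal; que la misma relaci\'on valga respecto de un di\'ametro arbitrario con ordenadas paralelas a la cuerda, y c\'omo $N'$ depende de $N$ y del \'angulo $\theta$ (a saber, $N'=N/\sin^2\theta$), es el lema cl\'asico de Apolonio I.49 y deber\'{\i}as enunciarlo o probarlo. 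Concedido ese lema, el resto cierra: de $T(b/2)=O(N',d)$ y $T(b'/2)=O(M',d')$, la hip\'otesis $b:d=b':d'$ da $N'/d=M'/d'$, es decir $N':M'=d:d'$, de modo que la correspondencia $(x:\xi)=(y:\eta)=(N':M')$ de la proposici\'on anterior lleva la cuerda $x=d$ a $\xi=d'$ y, siendo iguales los \'angulos de los dos sistemas oblicuos, es una homotecia que env\'{\i}a el primer segmento sobre el segundo. Conviene hacer expl\'{\i}cito ese paso $b:d=b':d'\Rightarrow d:d'=N':M'$, que en tu redacci\'on queda impl\'{\i}cito, pues es precisamente lo que garantiza que la afinidad respete la base del segmento y no s\'olo la curva completa.
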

En el corpus arquimediano, el siracusano habla repetidamente sobre la similaridad en conexión con las cónicas. Con toda probabilidad él comprendió dicho significado a través del trabajo de Apollonius, cuya concepción está enraízada con el punto de vista de Euclides sobre el mismo tema; su definición (i.e., la de Arquímedes) puede ser reproducida, en forma abreviada, como sigue:
\begin{quote}
<<\textit{Nosotros decimos que dos cónicas son similares cuando las líneas trazadas hacia el diámetro en la dirección de una ordenada son proporcionales a las partes del diámetro que ellos determinan desde el vértice}>>.\footnote{No nos es bastante claro el por qué el Prof. Dijksterhuis, en su obra sobre \textit{Achimedes} realiza la siguiente cita en lo referente a la igualdad y similaridad de las cónicas en el tratado de Apollonius: Cf. \textit{Apollonius} Conica VI. Def. 2; referencia cruzada que pertenece a la monumental edición del Prof. Heiberg: \textit{Apollonii Pergaei quae graece exstant, cum commentariis antiquis} edidit et latine interpretatus est J.L. Heiberg. 2 vol. Leipzig, B.G. Teubner 1891-93. Dicha \textit{Opera}, solo contiene los libros I-IV, y la referencia de Dijksterhuis pertenece al libro VI. La alusión correcta se puede encontrar en, \textit{Apollonius of Perga. Treatise on conic sections. Edited in modern notation.} Sir Thomas L. Heath. Cambridge Univ. Press. 1896. pp. 379.}
\end{quote}
\begin{propo}[C.S. 15 (3)\footnote{Cf. Heiberg. \textit{Archimedes Opera Omnia.} Vol.I. pp. 358-363.}]
Si un plano encuentra un conoide sin cortarlo, éste lo encontrará en un único punto, y el plano trazado a través del punto de contacto y el eje lo harán en ángulo recto con el plano que tocó éste.
\end{propo}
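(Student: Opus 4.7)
La estrategia consiste en reducir el enunciado tridimensional a la geometría plana del ortotoma, empleando las Proposiciones C.S. 11(1) y C.S. 15(1) ya establecidas. En esencia, mostraría que para cualquier punto $P$ del ortoconoide existe un \emph{único} plano que lo encuentra sólo en $P$, y que dicho plano es precisamente el ortogonal al plano meridiano por $P$ a lo largo de la tangente al ortotoma meridiano en dicho punto.

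Primero, sea $P$ un punto del conoide y sea $\beta$ el plano determinado por $P$ y el eje del ortoconoide. Por C.S. 11(1), la intersección $\beta\cap\text{conoide}$ es un ortotoma $\Gamma$ con $P\in\Gamma$ y eje sobre el eje de revolución. Sea $t$ la tangente a $\Gamma$ en $P$, trazada dentro de $\beta$, y sea $\ell$ la recta que pasa por $P$ perpendicular a $\beta$ (la cual es, por simetría, tangente en $P$ a la sección circular del conoide por el plano normal al eje por $P$). Definiría entonces $\gamma$ como el plano engendrado por $t$ y $\ell$; por construcción $\gamma$ es perpendicular a $\beta$. Este $\gamma$ será el candidato al plano tangente.

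La verificación de que $\gamma$ toca al conoide sólo en $P$, y de que ningún otro plano por $P$ puede hacerlo sin cortarlo, se apoya en dos aplicaciones de C.S. 15(1). Primero: si un punto $R\in\gamma$ distinto de $P$ estuviera sobre el conoide, la paralela al eje por $R$ tendría un segmento en el interior, mientras que un cálculo directo (usando que $t$ es tangente a $\Gamma$ y $\ell$ a la sección circular) muestra que todo punto de $\gamma$ distinto de $P$ cae en el lado convexo del ortoconoide, luego exterior a él por C.S. 15(1). Segundo: si el plano $\alpha$ del enunciado difiriese de $\gamma$, su traza sobre $\beta$ sería una recta por $P$ distinta de $t$, que en consecuencia cortaría a $\Gamma$ en un segundo punto $P'$; la paralela al eje por un punto interior del segmento $PP'$ caería entonces dentro del conoide por C.S. 15(1), forzando a que $\alpha$ sí cortara la superficie, en contradicción con la hipótesis. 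La unicidad del punto de contacto se obtiene del mismo modo: si $P,Q\in\alpha\cap\text{conoide}$ con $Q\neq P$, el plano a través de $PQ$ y del eje cortaría al ortoconoide en un ortotoma que contendría a $P$ y a $Q$, y la cuerda $PQ\subset\alpha$ tendría puntos interiores a dicho ortotoma, contradiciendo la hipótesis de tangencia.

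El obstáculo principal será formalizar con rigor arquimediano —es decir, sin recurrir a nociones modernas de diferenciabilidad— la equivalencia entre estar en el lado \emph{convexo} o \emph{cóncavo} del ortotoma meridiano y la conclusión de C.S. 15(1) sobre la posición de las paralelas al eje respecto de la superficie. Asimismo conviene tratar con cuidado el caso degenerado en que los dos hipotéticos puntos de contacto $P,Q$ yacen sobre una misma paralela al eje, donde el plano meridiano por $PQ$ no está unívocamente determinado; este caso se resuelve directamente aplicando C.S. 15(1) a la recta $PQ$ misma, que al ser paralela al eje y encontrar al conoide en dos puntos debería tener un segmento interior, produciendo la contradicción deseada.
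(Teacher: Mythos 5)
Tu estrategia global coincide en esp\'{\i}ritu con la del texto: reducir todo a la par\'abola meridiana (C.S. 11(1)) y al c\'{\i}rculo paralelo, y usar C.S. 15(1) para decidir qu\'e queda dentro y qu\'e queda fuera del conoide; la parte de la perpendicularidad (el plano tangente contiene la tangente al c\'{\i}rculo paralelo, que es perpendicular al plano meridiano por el punto de contacto) es esencialmente id\'entica a la del art\'{\i}culo. Sin embargo, hay un paso que falla tal como lo escribes: en la prueba de la unicidad del punto de contacto invocas ``el plano a trav\'es de $PQ$ y del eje''. Ese plano en general no existe: la recta $PQ$ y el eje de revoluci\'on son gen\'ericamente rectas alabeadas, y s\'olo son coplanarias en casos muy particulares. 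El caso degenerado que t\'u tratas aparte ($P$ y $Q$ sobre una misma paralela al eje) no es el problem\'atico; el problem\'atico es el caso gen\'erico.

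La construcci\'on correcta --- que es exactamente la que emplea el art\'{\i}culo, siguiendo a Arqu\'{\i}medes --- es trazar por $P$ y por $Q$ sendas rectas paralelas al eje; al ser paralelas entre s\'{\i}, estas dos rectas s\'{\i} determinan un plano, que contiene a la cuerda $PQ$ y es paralelo al eje. Por C.S. 11(1) ese plano corta al conoide en un ortotoma que pasa por $P$ y por $Q$, y los puntos interiores del segmento $PQ$ caen en el interior de esa secci\'on, luego en el interior del conoide, lo que contradice que el plano $\alpha\supset PQ$ no corte la superficie. Con esa correcci\'on tu argumento queda en pie. Observa adem\'as un detalle menor: la traza de $\alpha$ sobre el plano meridiano $\beta$ no tiene por qu\'e cortar a $\Gamma$ en un segundo punto $P'$ si dicha traza resulta paralela al di\'ametro del ortotoma; en ese caso la conclusi\'on se salva igualmente porque, por la proposici\'on plana correspondiente a C.S. 15(1), esa recta penetra de todos modos en el interior de la secci\'on. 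Por lo dem\'as, tu prueba es bastante m\'as larga que la del texto porque construyes y caracterizas el plano tangente $\gamma$, cosa que el enunciado no exige: basta el argumento por contradicci\'on para la unicidad y la observaci\'on sobre la tangente al c\'{\i}rculo paralelo para la perpendicularidad.
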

\begin{proof}
Supóngase que el plano toca la superficie en dos puntos \textLipsias{A} y \textLipsias{B}. Trace a través de cada uno de ellos aquellos puntos una línea recta paralela al eje, y trace un plano a través de aquellas dos líneas rectas. Este plano corta el conoide en un ortotoma sobre el que están los puntos \textLipsias{A} y \textLipsias{B}. Los puntos del segmento de línea \textLipsias{AB} entonces caen en el interior de la sección, i.e., en el interior del conoide.\\ La segunda parte de la proposición es evidente para el plano tangente al vértice. En efecto, las tangentes a los vértices a dos secciones de los conoides en los planos a través de los ejes son perpendiculares al eje, y consecuentemente así también para el plano tangente (aquí entonces este es tomado en consideración para que el plano tangente sea determinado por dos tangentes a las curvas sobre la superficie a través del punto bajo cuestión). Si el plano toca al conoide en otro punto, se sobrentiende que el plano contiene la tangente al círculo paralelo y es consecuentemente un ángulo recto a la sección meridiana a través del punto de contacto.
\end{proof}
%%%%%%%%%%%%%%%%%%%%%%%%%%%%%%%%%%%%%%%%%%%%%%%%%%%%%%%%%%%%%%%%%%%%%%%%%%%%%%%%%%%%%%%%%%%%%%%%%%%%%%%%%%%%%%%%%%%%%%%%%%%%%%%%%%%%%%%%%%%%%%%%%%%%%%%%%%%%%%%%%
\section{El ambliconoide (hiperboloide de revolución)}
\begin{propo}[C.S. 11 (2)\footnote{Cf. Heiberg. Op. cit. pp. 341-342.}] Si un ambliconoide (hiperboloide de revolución) es cortado por un plano a través del eje, o paralelo al eje, o a través del vértice del cono que envuelve al conoide, la sección será un amblitoma (\textLipsias{\>{a}mblitoma}), i.e., el plano que pasa a través del eje, el mismo que contiene la figura <rotante>; si ésta es paralelo al eje, uno similar a ella; si esta pasa a través del vértice del cono que envuelve al conoide, uno que no es similar a ella; y el diámetro de la sección será la sección común al plano de corte de la superficie y el plano a través del eje en ángulos rectos al plano de corte.\footnote{La propoposición anterior, consta de cuatro ítems, todos ellos <<evidentes>>, como afirma Heath. Cf. Heath. Op. cit. pp. 122.}
\end{propo}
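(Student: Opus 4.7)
El plan consiste en tratar los tres casos de la proposici\'on con la misma estrategia empleada en las demostraciones de C.S.\,11(1) y C.S.\,12. Dado un punto arbitrario $K$ sobre la intersecci\'on del plano de corte con el ambliconoide, bajar\'ia la perpendicular $K\Theta$ al di\'ametro conjeturado de la secci\'on, levantar\'ia por su pie un plano perpendicular al eje --- el cual corta al ambliconoide en un c\'irculo cuyo di\'ametro $EZ$ yace en el plano meridiano --- y explotar\'ia la identidad de potencia $T(K\Theta)=O(E\Theta,Z\Theta)$. El objetivo \'ultimo es exhibir el cociente $T(K\Theta)$ respecto del producto de los segmentos determinados sobre el di\'ametro conjeturado como una raz\'on constante, que es precisamente la \emph{s\'ymptoma} del amblitoma.

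Primero despachar\'ia el caso en que el plano de corte pasa por el eje: all\'i la intersecci\'on es literalmente una de las meridianas generatrices, esto es, el mismo amblitoma rotante, con lo cual no resta nada que probar m\'as all\'a de se\~nalar la identificaci\'on. A continuaci\'on, para el plano paralelo al eje proceder\'ia en estricta analog\'ia con C.S.\,12, apelando a la proposici\'on 3 del tratado \emph{Sobre conoides y esferoides} para traducir el producto $O(E\Theta,Z\Theta)$, proveniente del c\'irculo horizontal, al producto $O(A'\Theta,A\Theta)$ en t\'erminos de los puntos en que el amblitoma meridiano corta al eje; la raz\'on que interviene resulta ser la misma que la del amblitoma generador, por lo cual la secci\'on es un amblitoma \emph{similar} al de la figura rotante.

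El caso del plano que pasa por el v\'ertice del cono envolvente es el delicado. Ahora el pie de la perpendicular recorre un di\'ametro cuyos extremos no quedan determinados por cortes con la meridiana, sino por la intersecci\'on del plano de corte con las as\'intotas, i.e., con las generatrices del cono envolvente en el plano meridiano. Al adaptar la proposici\'on 3 a esta configuraci\'on se sigue obteniendo una raz\'on constante --- de modo que la secci\'on es todav\'ia un amblitoma --- pero el coeficiente que interviene difiere del correspondiente al generador, y la secci\'on \emph{no} es similar al amblitoma rotante. El principal obst\'aculo t\'ecnico residir\'a justamente en este tercer caso: habr\'a que rastrear con cuidado c\'omo el v\'ertice del cono envolvente entra en la determinaci\'on del di\'ametro, para exhibir el cambio de coeficiente sin perder la estructura de la s\'ymptoma. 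Realizados los tres c\'alculos, la afirmaci\'on final sobre el di\'ametro --- la secci\'on com\'un del plano de corte y el plano a trav\'es del eje en \'angulos rectos a \'el --- se desprende de inmediato de la construcci\'on, pues es precisamente sobre esa recta donde se desenvuelve toda la discusi\'on de las ordenadas $K\Theta$.
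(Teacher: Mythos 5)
Tu esquema sigue esencialmente el mismo camino que la demostraci\'on que el art\'iculo reproduce de Dijksterhuis: la identidad de potencia $T(\Theta E)=O(ME,NE)$ sobre el c\'irculo cortado por el plano perpendicular al eje, la reducci\'on mediante la relaci\'on caracter\'istica del amblitoma meridiano, y la misma divisi\'on en tres casos con id\'enticas conclusiones (trivial para el plano por el eje, amblitoma similar para el plano paralelo, amblitoma no similar para el plano por el v\'ertice del cono envolvente), rematada con la misma observaci\'on sobre el di\'ametro. El \'unico paso que dejas pendiente ---c\'omo cambia el coeficiente en el tercer caso--- es precisamente el que el art\'iculo completa trazando la paralela auxiliar $E\Lambda\parallel\Delta A$ y componiendo la raz\'on meridiana constante con el factor $[T(KA),T(K\Delta)]$, que no es $1:1$; tu plan identifica correctamente ese punto como el obst\'aculo t\'ecnico a resolver.
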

Dijksterhuis\footnote{Cf. Dijksterhuis. \textit{Archimedes}. pp. 113 y ss.}, ofrece la siguiente demostración:
\begin{proof}
\textbf{11(1)}. La Proposición es evidente para un plano a través del eje.\\
\textbf{11(2)}. Como puede verse en la siguiente figura:
\begin{figure}[ht!]
\begin{center}
  \includegraphics[scale=0.5]{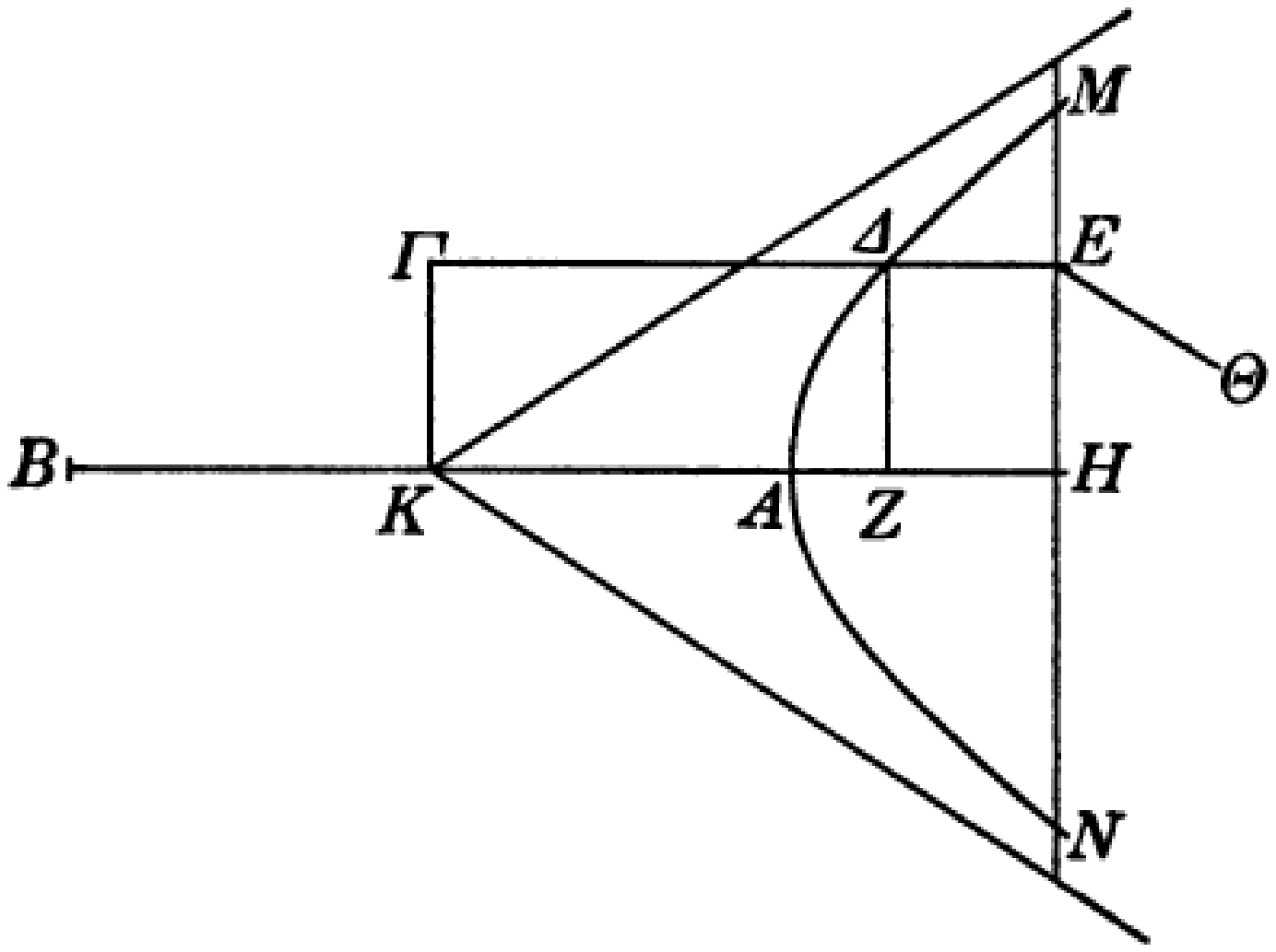}\\
  \caption{}\label{B.5}
\end{center}
\end{figure}
\newline
Sea\textLipsias{GA} la intersección de un plano paralelo al eje \textLipsias{BA} con un plano meridiano elegido como el plano del papel, un ángulo recto al plano, \textLipsias{Q} algún punto de la sección, \textLipsias{QE} la perpendicular de \textLipsias{Q} a \textLipsias{GD}. Ahora, $$T(\Theta E)=O(ME,NE)=T(HM)-T(HE)$$ teniéndose que
\begin{align*}
[T(HM),O(AH,BH)]&=[T(\Delta Z), O(AZ,BZ)],
\shortintertext{entonces}
[T(HM), T(HE)]&=[T(KH)-T(KA), T(KZ)-T(KA)]\\
\shortintertext{o}
[T(\Theta E), T(HM)]&=[T(KH)-T(KZ), T(KH)-T(KA)],\\
\shortintertext{entonces}
[T(\Theta E), T(\Gamma E)-T(\Gamma\Delta)]&=[T(HM), T(KH)-T(KA)]
\end{align*}
El lugar de \textLipsias{J} entonces es un amblitoma que es similar a la sección meridiana.\\
\textbf{11(3)}. En la figura \ref{B.6}
\begin{figure}[ht!]
\begin{center}
  \includegraphics[scale=0.5]{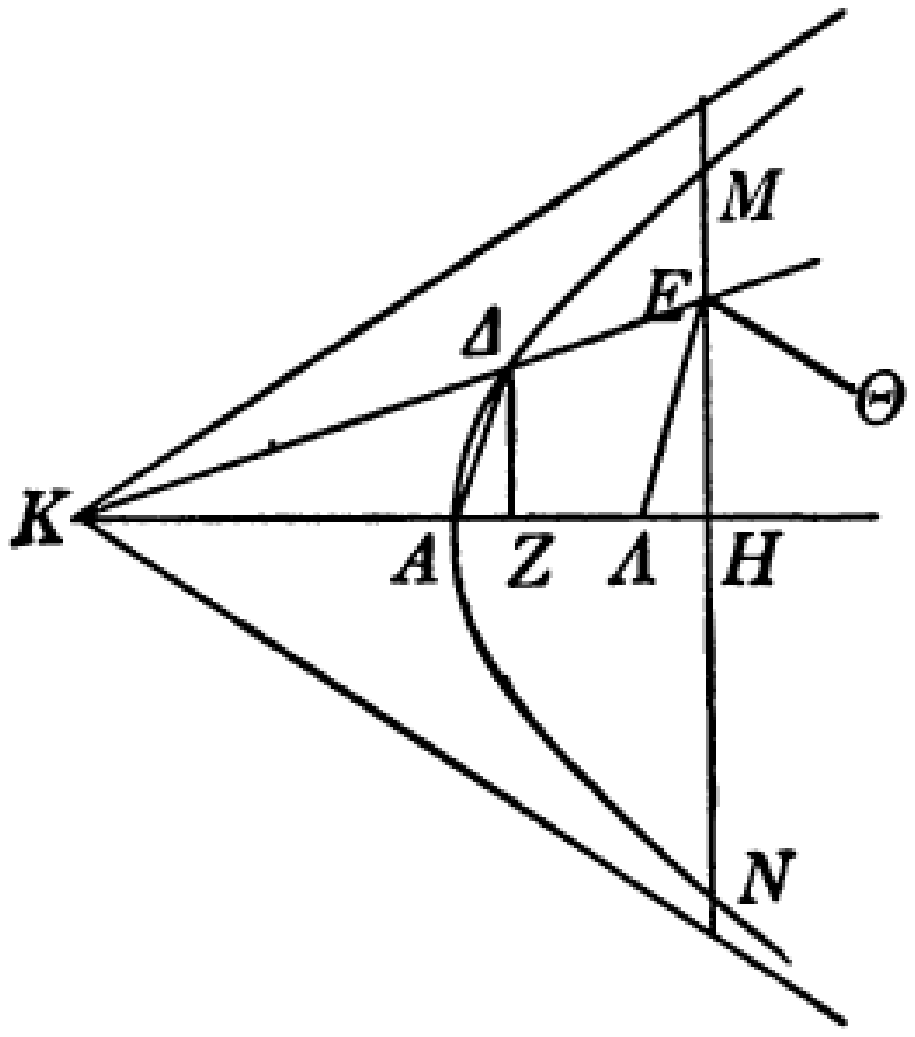}\\
  \caption{}\label{B.6}
\end{center}
\end{figure}
\newline
Sea \textLipsias{KDE} la intersección con el plano de corte a través del vértice para el cono envolvente (centro de la sección meridiana). Ahora tenemos:
$$T(\Theta E)=O(ME,NE)=T(HM)-T(HE)$$ También, si $E\Lambda\parallel\Delta A$,\\
\begin{align*}
[T(MH), T(KH)-T(KA)]&=[T(\Delta Z), T(KZ)-T(KA)]\\
&=[T(EH), T(KH)-T(K\Lambda)]
\end{align*}
entonces, $$[T(MH), T(EH)]=[T(KH)-T(KA), T(KH)-T(K\Lambda)],$$ de lo que se sigue que
\begin{align*}
[T(MH)-T(EH), T(MH)]&=[T(K\Lambda)-T(KA), T(KH)-T(KA)],\\
\intertext{entonces}\\
[T(\Theta E), T(K\Lambda)-T(KA)]&=[T(MH), T(KH)-T(KA)]
\end{align*}
La proporción $$[T(\Theta E), T(KE)-T(K\Delta)]$$ entonces está compuesta de las proporciones constantes $$[T(MH), T(KH)-T(KA)]\,\text{y}\, [T(K\Lambda)-T(KA), T(KE)-T(K\Delta)]$$ i.e., de la proporción para el cuadrado sobre la ordenada y el rectángulo sobre la abscisa de la sección meridiana y la proporción $[T(KA), T(K\Delta)],$ que no es una proporción $1:1.$ El lugar de \textLipsias{J} entonces es un amblitoma, pero este no es similar a la sección meridiana.
\end{proof}
\begin{propo}
Si un ambliconoide (\textLipsias{\>{a}mblikwnoide}\,$=\text{hiperboloide de revolución}$) es cortado por un plano que encuentra todos los generadores del cono envolvente del conoide (\textLipsias{konoide}) y este no forma ángulo recto con el eje, la sección será un oxitoma (\textLipsias{\>{o}xitoma}), y el diámetro más grande será la parte del corte externo en el diámetro del conoide de la intersección para el plano de corte y el plano a través del eje en ángulo recto a los planos de corte.
\end{propo}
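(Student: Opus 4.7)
The plan is to mimic the proof of Proposition~C.S.~12 on the orthoconoide, adapting the argument to the case where the meridian section is an amblitoma (hyperbola) rather than an ortotoma (parabola). I would take as the plane of the paper the meridian plane through the axis of the ambliconoide which stands at right angles to the cutting plane, and let $A\Gamma$ be the chord in which the cutting plane meets this meridian plane, so that $A\Gamma$ is itself a chord of the meridian amblitoma. The hypothesis that the cutting plane meets every generator of the enveloping cone guarantees that $A\Gamma$ is a finite segment: this is the feature that distinguishes the present case from the parallel-to-axis or asymptote-directed sections of C.S.~11(2).

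First I would take an arbitrary point $K$ of the section, drop the perpendicular $K\Theta$ from $K$ onto $A\Gamma$ inside the cutting plane, and erect through $\Theta$ the plane at right angles to the axis. This auxiliary plane meets the ambliconoide in a circle whose diameter $EZ$ is a chord of the meridian amblitoma perpendicular to the axis at $\Theta$. Since $K$ lies on that circle, the semichord relation gives
\begin{equation*}
T(K\Theta)=O(E\Theta,Z\Theta).
\end{equation*}

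Second, I would invoke the proposition (already used in the proof of C.S.~12 and attributed in the excerpt to Proposition~3 of \emph{Sobre conoides y esferoides}) stating that, for two chords of a conic drawn in fixed directions through a variable point, the ratio of the rectangles formed on the segments is constant. Applied to the meridian amblitoma at $\Theta$ this gives
\begin{equation*}
[O(E\Theta,Z\Theta),O(A\Theta,\Gamma\Theta)]=[T(BT),T(NT)],
\end{equation*}
where $NM$ denotes the tangent to the meridian amblitoma parallel to $A\Gamma$, meeting in $T$ the tangent at the vertex (parallel to $EZ$). The right-hand ratio is a constant $[T(A\Lambda),T(A\Gamma)]$ independent of the choice of $K$, so that
\begin{equation*}
[T(K\Theta),O(A\Theta,\Gamma\Theta)]=[T(A\Lambda),T(A\Gamma)].
\end{equation*}
This is precisely the \emph{symptoma} of an oxitoma whose diameters are $A\Gamma$ and $A\Lambda$, so the locus of $K$ is an ellipse as stated.

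The main obstacle I would expect is not the proportional calculation itself, which is routine in the Archimedean idiom once the preceding propositions are in hand, but rather the careful verification that the hypothesis on the cutting plane does two jobs at once: it forces the chord $A\Gamma$ of the meridian amblitoma to be finite (so that the locus is bounded and cannot degenerate into an amblitoma as in C.S.~11(2)), and it forces $T(A\Lambda)<T(A\Gamma)$, so that $A\Gamma$ deserves to be called the \emph{larger} diameter as the statement requires. Both points reduce to controlling the inclination of the cutting plane with respect to the half-aperture of the enveloping cone, and this is where the geometric content of the hypothesis is actually used.
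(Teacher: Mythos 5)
Your proposal follows essentially the same route as the paper's proof: the circular section through $\Theta$ gives $T(K\Theta)=O(E\Theta,Z\Theta)$, the fixed-directions chord proposition converts this into the constant ratio $[T(K\Theta),O(A\Theta,\Gamma\Theta)]=[T(BT),T(NT)]$, and that is read off as the \emph{symptoma} of an oxitoma with diameters $A\Gamma$ and $A\Lambda$. The one step you flag as the main remaining obstacle --- that $A\Gamma$ is in fact the larger diameter --- is dispatched in the paper by the single observation that for the amblitoma $BP>BM$ (in contrast with the ortotoma of C.S.~12, where $PB=BM$), whence $TN>TM>BT$ and so $BT<NT$, making the constant ratio less than unity.
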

\begin{proof}\footnote{Cf. Dijksterhuis. Op. cit. pp. 115.}
De la figura \ref{B.5}, encontramos que
\begin{figure}[ht!]
\begin{center}
  \includegraphics[scale=0.6]{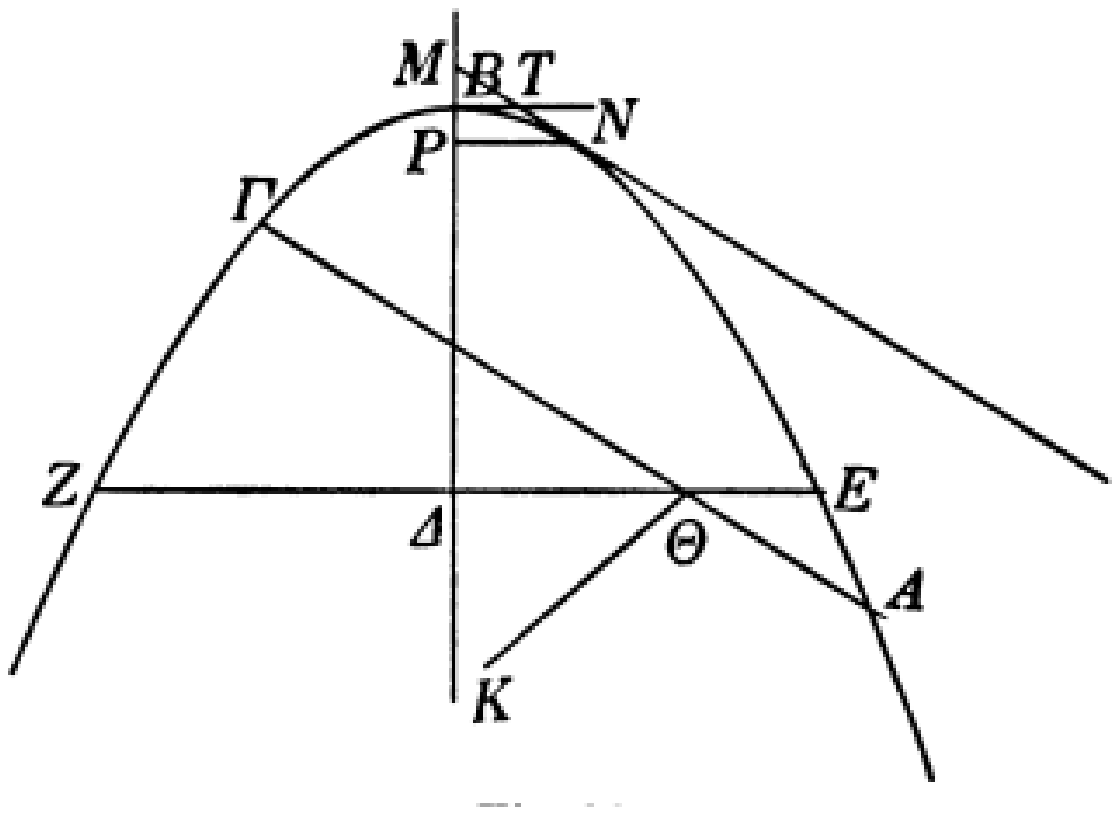}\\
  \caption{}\label{}
\end{center}
\end{figure}
\newline
$$[T(K\Theta), O(A\Theta, \Gamma\Theta)]=[T(BT), T(NT)]$$ del que ya se sigue que el lugar de \textLipsias{K} es un oxitoma. Ahora, $$BP>BM,\,\therefore\,TN>TM>BT,$$ por tanto, $BT<NT.$ Entonces \textLipsias{AG} es el diámetro más grande.
\end{proof}
%%%%%%%%%%%%%%%%%%%%%%%%%%%%%%%%%%%%%%%%%%%%%%%%%%%%%%%%%%%%%%%%%%%%%%%%%%%%%%%%%%%%%%%%%%%%%%%%%%%%%%%%%%%%%%%%%%%%%%%%%%%%%%%%%%%%%%%%%%%%%%%%%%%%%%%%%%%%%%%%%%
\section{Superficies cuádricas, el tratamiento algebraico}
\subsection{Los primeros métodos}
El primer texto que recoge un estudio algebraico sistemático para cuádricas en general pertenece a Euler. Él no cubrió muchos hechos, pero forjó una herramienta para estudiar tales superficies en detalle. Su técnica esencial fue el cambio de ejes rectangulares. Iniciemos con la ecuación general de segundo grado en dos variables: $$\alpha z^2+\beta yz+\gamma xz+\delta y^2+\epsilon xy+\zeta x^2+\eta x+\theta y+\iota z+\kappa=0.$$ Observemos las partes de la superficie. A menos que vivan enteramente en un dominio infinito, necesita ser posible para a lo sumo una de las variables tomar condiciones infinitas; sea entonces la variables $z$ quien las tome.\\ En comparación, nosotros podemos dejar de lado la constante y los términos lineales, así que en el infinito la superficie actúa similarmente como el cono $$\alpha z^2+\beta yz+\gamma xz+\delta y^2+\epsilon xy+\zeta x^2=0.$$ Este cono no tendrá puntos reales excepto el origen.\\ Para esto es necesario que $$4\alpha\delta-\beta^2>0;\,4\alpha\zeta-\gamma^2>0;\,4\delta\zeta-\epsilon^2>0.$$ Tales condiciones necesarias no son suficientes. Necesita tenerse que para todas las condiciones de $x$ y $y$
\begin{align*}
4\alpha\left(\delta y^2+\epsilon xy+\zeta x^2\right)>{\left(\beta y+\gamma x\right)}^2,&\\
\left(4\alpha\delta-\beta^2\right)y^2+2\left(2\alpha\epsilon-\beta\gamma\right)xy+\left(4\alpha\zeta-\gamma^2\right)x^2>0&.
\end{align*}
Esto realmente no es una dirección muy satisfactoria para dicho problema.\\ El próximo paso dado por Euler consistía en simplificar los términos cuadráticos. Aquí él establece sin una demostración adecuada que por una rotación de los ejes nosotros podemos eliminar los términos mixtos en $xy, yz, zx$. La ecuación resultante es
\begin{equation}
Ap^2+Bq^2+Cr^2+Gp+Hq+Ir+K=0
\end{equation}
Cuando $ABC\neq 0$ nosotros podemos movernos a los ejes paralelos, así que $$Ap^2+Bq^2+Cr^2+L=0.$$ Revirtiendo la posibilidad de que un término cuadrático, por decir $z^2$, tendría que anularse, él escribe $$Ap^2+Bq^2+Cp+Hq+Ir+L=0.$$ Él entonces intercambia los ejes paralelos, obteniendo $$Ap^2+Bq^2=ar.$$ El tratamiento temprano obtuvo un progreso sustancial en los trabajos de Monge-Hachette. Monge fue uno de los maestros más grandes del mundo, y sabemos que muchas de sus grandes ideas estaban contenidas en las lecturas que él deliberó en la \textit{École Polytechnique}. La ecuación estándar escrita como
\begin{equation}\label{2}
At^2+A'u^2+A''v^2+2Buv+2B'vt+2B''ut+2Ct+2C'u+2C''v=k
\end{equation}
una línea a través de $(t',u',v')$ es escrita como $$t-t'=l(v-v')\quad u-u'=m(v-v').$$ Asumiendo que $(t',u',v')$ no está en la superficie, sustituimos $t$ por $u$ y tenemos una ecuación que es lineal en $v$. Tomemos la raíz de $v=v'$ y substituyendo por $l$ y tomando a $m$, tenemos el plano tangente
\begin{align*}
&t\left(At'+B''u'+B'v'+C\right)+u\left(B''t'+A'u'+Bv'+C\right)+\\
&+v\left(B't'+Bu'+A''v'+C''\right)+Ct'+Cu'+C'''v'-K=0
\end{align*}
Los autores ahora van a investigar el centro de la superficie. Ellos substituyen en (\ref{2}) $$t=x+\alpha,\,u=y+\beta,\,v=z+\gamma.$$ Los coeficientes de $2x, 2y, 2z$ son $$A\alpha+B''\beta+B'\gamma+C,\,B''\alpha+A'\beta+B\gamma+C',\,B'\alpha+B\beta+A''\gamma+C''.$$ Esta primera clase de superficie, que son los cono con centro, son aquellas en las que tales tres expresiones pueden ser hechas para que se anulen. En nuestra notación $$\left|\begin{array}{ccc}A&B''&B'\\B''&A'&B\\B'&B&A''\end{array}\right|\neq 0.$$ Nosotros entonces tenemos la forma reducida
\begin{equation}\label{3}
Ax^2+A'y^2+A''z^2+2Byz+2B'zx+2B''xy=H
\end{equation}
El problema siguiente es más complicado y tiene que ver con los ejes de tal rotación en que los términos del producto se anulan.\\ La ecuación del plano tangente es $$\left[Ax'+B''y'+B'z'\right]x+\left[B''x'+A'y'+Bz'\right]y+\left[B'x'+By'+A''z'\right]z=H.$$
Esto también puede ser escrito como $$L(x-x')+M(y-y')+N(z-z')=0.$$ Si la normal va através del centro $$\dfrac{L}{x'}=\dfrac{M}{y'}=\dfrac{N}{z'}.$$ Los autores Monge-Hachette realizan una larga serie de manipulaciones, y finalmente muestran que la determinación de aquellas normales depende de la ecucación cúbica
\begin{equation}\label{4}
s^3\left(AB^2+A'{B'}^2+A''{B''}^2-2AA'A''-2BB'B''\right)+s^2\left(A'A''+A''A+AA'-B^2-2{B'}^2\right)+s\left(A+A'+A''\right)+1=0
\end{equation}
Nosotros podemos observar que los autores no trataron los coeficientes aquí invariantes por un cambio rectangular de ejes. Dejando ésta ecuación por un momento, regresando a \eqref{2}, y resolviendo para $t,u,v$ se tiene
\begin{align*}
\dfrac{1}{2}t&=-\dfrac{B'v+B''u+C}{A}+\sqrt{T};\\
\dfrac{1}{2}u&=-\dfrac{B''t+Bv+C'}{A'}+\sqrt{U};\\
\dfrac{1}{2}v&=-\dfrac{Bu+B'v+C''}{A''}+\sqrt{V}.
\end{align*}
Los puntos medios para las cuerdas perpendiculares a $u,v$ están en $$At+B'v+B''u+C=0.$$ Este plano va muy cerca del centro. Observemos entonces que los puntos medios de algún conjunto de cuerdas paralelas están en el plano a través del centro. Posteriormente, observamos de las propiedads de las cuerdas paralelas de la cónica con centro que tenemos conjuntos infinitos de tres diámetros donde el plano de cada par bisecta todas las cuerda paralelas al tercero, y es paralela al plano en la extremidad del tercero.\\ Regresando a la ecuación (\ref{4}) que es cúbica y real, observamos que necesita tener una solución real, dado un plano perpendicular a las cuerdas que esta bisecta. Los ejes principales en este plano tienen otros diámetros que son el mismo.\\ Tomando aquellos ejes principales como ejes coordenados, tenemos la forma canónica de la ecuación de una cónica con centro
\begin{equation}\label{5}
\dfrac{x^2}{a^2}\pm\dfrac{y^2}{b^2}\pm\dfrac{z^2}{c^2}=1
\end{equation}
Monge-Hachette obtienen la ecuación típica de un paraboloide $$Px^2+P'y^2-4PP'x=0.$$ Ellos realizan algunos esfuerzos para determinar cuándo una cuádrica será una superficie de revolución.\\ La simple ecuación (\ref{5}) es útil para determinar nuevas propiedades de la cuádrica con centro. Aquellas son desarrolladas en \textit{Monge-Hachette}, pp. 194 y ss.\\ Si nosotros tenemos dos cónicas en un plano de tal naturaleza que los coeficientes de los términos cuadráticos sean proporcionales, podemos escribirlos como:
\begin{align*}
Ax^2+2Bxy+Cy^2+2Dx+2Ey+F&=0,\\
Ax^2+2Bxy+Cy^2+2D'x+2E'y+F'&=0.
\end{align*}
Pero el primero de aquellos tendría que ser cambiado por una translación de los ejes en la forma $$Ax^2+2Bxy+Cy^2+F_1=0,$$ mientras que una translación similar de los ejes tendría que cambiar al otro por $$Ax^2+2Bxy+Cy^2+F_2=0,$$ y esto muestra que las curvas son similares, y están en el mismo lugar. Iniciemos a continuación con la ecuación (\ref{3}) y cortemos la superfice por
\begin{align*}
&z=Lx+My+N\\
&x^2\left(A+A''L^2+2B'L\right)+2xy\left(B''+BL+B'M+A''LM\right)+y^2\left(A'+A''M^2+2BM\right)+Px+Qy+R=0.
\end{align*}
Los tres primeros coeficientes son independientes de $N$. Esto produce:
\begin{teor}
Las secciones paralelas de una cuádrica son cónicas similares,\footnote{Cf. la definición dada por el magno Arquimedes en la p.8 de este apéndice.} y están en el mismo lugar.\quad\qedsymbol
\end{teor}
Considere en particular el elipsoide $$\dfrac{x^2}{a^2}+\dfrac{y^2}{b^2}+\dfrac{z^2}{c^2}=1\quad(a>b>c)$$ y corte este por la esfera $$x^2+y^2+z^2=b^2$$
$$\left(\dfrac{x\sqrt{(a^2-b^2)}}{a}+\dfrac{2\sqrt{(b^2-c^2)}}{c}\right)\left(\dfrac{x\sqrt{(a^2-b^2)}}{a}-\dfrac{z\sqrt{(b^2-c^2)}}{c}\right)=0.$$
La sección del elipsoide y la esfera son dos círculos. Un tratamiento similar puede ser aplicado a las otras cuádricas con centro.
\begin{teor}
Una cuádrica con centro que no es una superficie de revolución tendrá dos conjuntos de secciones circulares en planos paralelos.\\ Los centros de los círculos de tales conjuntos estarán sobre un diámetro.\quad\qedsymbol
\end{teor}
La primera edición de Monge-Hachette apareció en 1802 y contenía lo que podría observarse como una mención temprana de las secciones circulares de una cuádrica. Debe decirse, sin embargo, que sobre la p. 163 de la obra de D'Alembert\footnote{Jean Le Rond D'Alembert, \textit{Opuscules mathématiques}, Vol. vii. París, 1780, pp. 171.} está el estamento no demostrado de que el elipsoide, que él llama un \textit{esferoide}, tendrá una sección circular.\\ Podemos escribir la ecuación del hiperboloide de una hoja en la forma $$Px^2+P'y^2-P''z^2=1,$$ y cortar este por el plano $$y=\alpha x +\beta.$$ La proyección de la intersección sobre el plano $x,z$ es $$(P+\alpha^2P')x^2-P''z^2+2P'\alpha\beta x+P'\beta^1-1=0.$$ Elejimos $\alpha$ y $\beta$ de tal forma que $$\beta^2P-\alpha^2=\dfrac{P}{P'},\,\beta=\sqrt{\left(\dfrac{\alpha}{p}+\dfrac{1}{P'}\right)};$$ Entonces la proyección es $${\left(x\sqrt{(P+\alpha^2P')}+\alpha\sqrt{\dfrac{P'}{P}}\right)}^2-{\left(z\sqrt{P''}\right)}^2=0.$$ Esto es un par de líneas rectas. El paraboloide hiperbólico puede ser tratado en la misma dirección.
\begin{teor}
El hiperboloide de una hoja y el paraboloide hiperbólico pueden ser generados en dos formas diferentes por una línea que siempre está en movimiento para intersectar las tres líneas mutuamente asimétricas.\quad\qedsymbol
\end{teor}
El estamento temprano para este teorema puede ser hallado en Monge,\footnote{\textit{Sur les lignes de courbure de la surface de l'ellipsoide}. Journal de L'École Polytechnique. Vol. i. 1794} p. 5, sin demostración.\\ Suficientemente curioso es el hecho de que éste teorema fue demostrado mucho antes para el hiperboloide de revolución, por Wren,\footnote{Sir Christopher Wren. \textit{The generation of a hyperbolical cylindroid.} Philosophical Transactions of the Royal Society. Vol. iv. 1669. (172).} p. 333.\\
\begin{figure}[ht!]
\begin{center}
\includegraphics[scale=0.5]{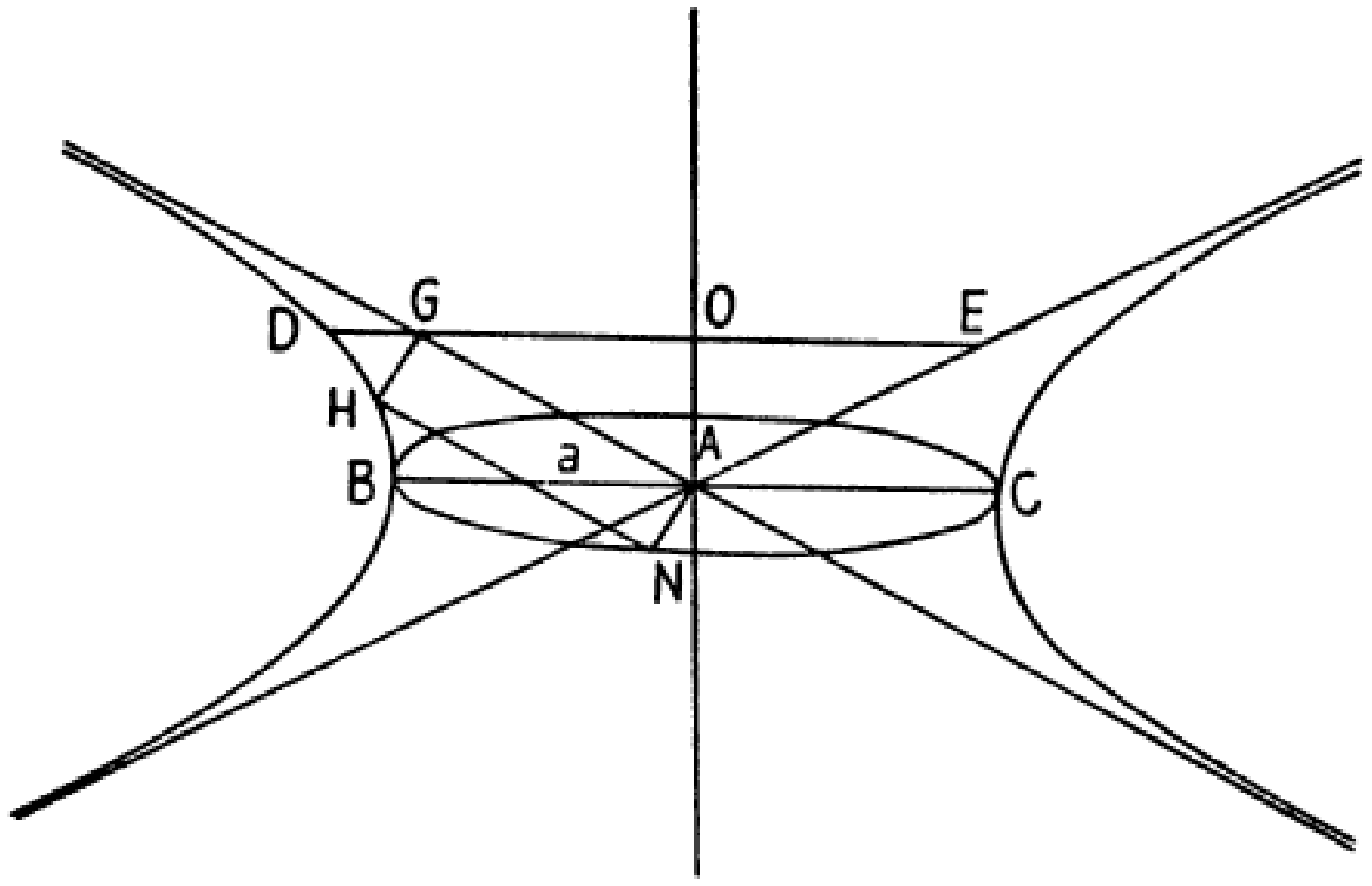}\\
\caption{}\label{B.7}
\end{center}
\end{figure}
\newline
Supóngase (ver Fig. \ref{B.7}) que la hipérbola en el plano del papel es rotada sobre el eje $OA.$ Sea $AN$ perpendicular al plano del papel. De $G$, algún punto sobre la asíntota, trace una línea paralela a $AN$ que tendrá que encontrar la superficie en $H$.
\begin{align*}
&HG^2=OH^2-OG^2=OD^2-OG^2,\\
&\dfrac{OD^2}{a^2}-\dfrac{OA^2}{b^2}=1;\,\dfrac{OG^2}{OA^2}=\dfrac{a^2}{b^2};\,\dfrac{OG^2}{a^2}=\dfrac{OA^2}{b^2}\\
&OD^2-OG^2=a^2=AN^2,\\
&GH=AN
\end{align*}
Entonces $NH$ es paralelo a $AG$, o a la línea a través de $N$ paralela a la asíntota que está completamente embebida en la superficie.
\begin{teor}[\textbf{Wren}]
La sección de un hiperboloide de revolución de una hoja en un plano a través de la asíntota de una hipérbola perpendicular al plano de su curva son dos líneas paralelas a su tangente.
\end{teor}
Se tienen otros dos teoremas que pueden ser hallados en la obra de Monge-Hachette.\\ Supongamos que cortamos una elipse de un elipsoide y tomamos esta pegándola a la curva de un cono, cuyo vértice es el final de un diámetro que contiene los centros de los círculos cortados de la superficie. Ahora se puede observar que este cono corta el elipsoide. Parte de esta sección es la elipse. El resto necesita ser una cónica, a través del vértice del cono, y así se tienen dos generadores (conos imaginarios). Pero como el plano tangente encuentra la superficie en el círculo, las dos líneas son minimales, y un plano paralelo cortará el cono en un círculo.
\begin{teor}
Si el vértice de un cono es un punto sobre un elipsoide que está al final de un diámetro que contiene los centros de un conjunto de secciones circulares, y los generadores son unidos por una elipse cortada del elipsoide, entonces aquellos planos que cortan los círculos de el elipsoide también cortarán círculos del cono.\quad\qedsymbol
\end{teor}
\begin{teor}
Si tres planos mutuamente perpendiculares son tangentes a un elipsoide, el lugar de intersección de tales puntos es una esfera con los mismos centros del elipsoide.
\end{teor}
\begin{proof}
Empleando la notación de Monge-Hachette, consideremos la superficie $$Px^2+P'y^2+P''z^2=1.$$ Sean los puntos de contacto $(x',y',z'), (x'',y'',z''), (x''',y''',z''').$ Se tiene que:
\begin{align*}
\textit{Tres ecuaciones del tipo}\,P{x'}^2+P'{y'}^2+P''{z'}^2&=1\\
\textit{Tres planos tangentes}\,P{x'x}^2+P'{y'y}^2+P''{z'z}^2&=1\\
\textit{Tres condiciones de perpendicularidad}\,P^2{x'x''}^2+{P'}^2{y'y''}^2+{P''}^2{z'z''}^2&=1\\
\end{align*}
Ahora se introducen 9 variables:
$$\begin{array}{cccc}
Px'=a,&Px''=a',&Px'''=a'',\\
P'y'=b,&P'y''=b',&P'y'''=b'',\\
P''z'=c,&P''z''=c',&P'''z'''=c''',\\
\sum a^2=R^2,&\sum {a'}^2={R'}^2,&\sum{a''}^2={R''}^2
\end{array}$$
Se tiene, entonces,
\begin{align*}
\textit{Tres ecuaciones}\,\dfrac{a^2}{P}+\dfrac{b^2}{P'}+\dfrac{c^2}{P''}&=1\\
\textit{Tres ecuaciones}\, ax+by+cz&=1\\
\textit{Tres ecuaciones}\, aa'+bb'+cc'&=0
\end{align*}
Se tiene la matriz
$$\left(\begin{array}{ccc}
\dfrac{a}{R}&\dfrac{b}{R}&\dfrac{c}{R}\\\\
\dfrac{a'}{R'}&\dfrac{b'}{R'}&\dfrac{c'}{R'}\\\\
\dfrac{a''}{R'''}&\dfrac{b''}{R'''}&\dfrac{c''}{R'''}
\end{array}\right)$$
que es ortogonal, y sean
\begin{align*}
x&=\left(\begin{array}{ccc}
\dfrac{1}{R}&\dfrac{b}{R}&\dfrac{c}{R}\\\\
\dfrac{1}{R'}&\dfrac{b}{R'}&\dfrac{c}{R'}\\\\
\dfrac{1}{R'''}&\dfrac{b}{R'''}&\dfrac{c}{R'''}
\end{array}\right)\\
y&=\left(\begin{array}{ccc}
\dfrac{a}{R}&\dfrac{1}{R}&\dfrac{c}{R}\\\\
\dfrac{a}{R'}&\dfrac{1}{R'}&\dfrac{c}{R'}\\\\
\dfrac{a}{R'''}&\dfrac{1}{R'''}&\dfrac{c}{R'''}
\end{array}\right)\\
z&=\left(\begin{array}{ccc}
\dfrac{a}{R}&\dfrac{b}{R}&\dfrac{1}{R}\\\\
\dfrac{a}{R'}&\dfrac{b}{R'}&\dfrac{1}{R'}\\\\
\dfrac{a}{R'''}&\dfrac{b}{R'''}&\dfrac{1}{R'''}
\end{array}\right)
\end{align*}
$$x^2+y^2+z^2=\dfrac{1}{R^2}+\dfrac{1}{{R'}^2}+\dfrac{1}{{R''}^2}$$
teniéndose exactamente que
\begin{align*}
\dfrac{1}{P}+\dfrac{1}{P'}+\dfrac{1}{P''}&=\dfrac{1}{R^2}+\dfrac{1}{{R'}^2}+\dfrac{1}{{R''}^2};\\
x^2+y^2+z^2&=\dfrac{1}{P}+\dfrac{1}{P'}+\dfrac{1}{P''}
\end{align*}
\end{proof}

%\printbibliography
\nocite{*}
\bibliographystyle{abbrvnat}
\bibliography{Archi}

\end{document}